\tikzset{font={\fontsize{8pt}{12}\selectfont}}
\newtheorem{thm}{Theorem}[section]
\newtheorem{prop}[thm]{Proposition}
\newtheorem{lem}[thm]{Lemma}
\newtheorem{cor}[thm]{Corollary}
\newenvironment{proof}
{\par\addvspace{0.3cm}\noindent{\rm Proof. }}
{\nopagebreak\mbox{}\hfill $\Box$\par\addvspace{0.25cm}}
\newcommand{\T}{\mathbb{T}}
\newcommand{\Z}{\mathbb{Z}}
\newcommand{\C}{\mathbb{C}}
\newcommand{\kA}{{\frak A}}
\newcommand{\kB}{{\frak B}}
\newcommand{\kD}{{\frak D}}
\newcommand{\kJ}{{\frak J}}
\newcommand{\tQC}{\widetilde{QC}}
\newcommand{\tC}{\widetilde{C}}
\newcommand{\om}{\omega}
\newcommand{\eps}{\varepsilon}
\newcommand{\iy}{\infty}
\DeclareMathOperator{\closid}{clos\, id}
\DeclareMathOperator{\clos}{clos}
\numberwithin{equation}{section}
\begin{document}

\title{On the maximal ideal space of even quasicontinuous functions on the unit circle}
\author{Torsten Ehrhardt\thanks{tehrhard@ucsc.edu}\\
   Department of Mathematics\\
        University of California\\
       Santa Cruz, CA 95064, USA
  \and   Zheng Zhou\thanks{zzho18@ucsc.edu}\\
 Department of Mathematics\\
        University of California\\
       Santa Cruz, CA 95064, USA}

\date{}

\maketitle

% ----------------------------------------------------------------
\begin{abstract}
Let $PQC$ stand for the set of all piecewise quasicontionus function on the unit circle, i.e., 
the smallest closed subalgebra of $L^\infty(\T)$ which contains the classes of  all piecewise continuous function $PC$ and all
quasicontinuous functions $QC=(C+H^\infty)\cap(C+\overline{H^\infty})$. We analyze the fibers of the maximal ideal spaces
$M(PQC)$ and $M(QC)$ over maximal ideals from $M(\widetilde{QC})$, where $\widetilde{QC}$ stands for the $C^*$-algebra of all
even quasicontinous functions. The maximal ideal space $M(\tQC)$ is decribed and partitioned into
various subsets  corresponding to different descriptions of the fibers.
\end{abstract}

% ----------------------------------------------------------------

%%%%%%%%%%%%%%%%%%%%%%%%%%%%%%%%%%%%%%%%%%%%%%%%%%%%%%%%%%%%%%%%%%%%%%

\section{Introduction}\label{s:1}

Let $L^\infty(\T)$ stand for the $C^*$-algebra of all (complex-valued) Lebesgue measurable and essentially bounded functions on the unit circle
$\T=\{\,t\in\C\,:\,|t|=1\,\}$, let $C(\T)$ stand for the class of all continuous functions on $\T$, and let $PC$ stand for the set of all
piecewise continuous functions on $\T$, i.e.,  all functions $f:\T\to\C$ such that the one-sided limits
$f(\tau\pm0)=\lim\limits_{\varepsilon\to +0} f(\tau e^{\pm i \varepsilon })$ exist at each $\tau\in\T$. The class of quasicontinuous functions is 
defined by
$$
QC=(C+H^\infty)\cap (C+\overline{H^{\infty}}),
$$
where $H^\iy$ stands for the Hardy space consisting of all $f\in  L^\iy(\T)$ such that its Fourier coefficients 
$f_n=\frac{1}{2\pi}\int_0^{2\pi} f(e^{ix})e^{-inx}\, dx$ vanish for all $n<0$. The space $\overline{H^\iy}$ is the Hardy space of 
all functions $f\in L^\infty(\T)$ such that $f_n=0$ for all $n>0$.

The Toeplitz and Hankel operators  $T(a)$ and $H(a)$ with $a\in L^\iy(\T)$ acting on $\ell^2(\Z_+)$ are defined by the infinite matrices
$$
T(a) =(a_{j-k})_{j,k=0}^\iy,\qquad H(a)=(a_{j+k+1})_{j,k=0}^\iy\,.
$$
Quasicontinuous functions arise in connection with Hankel operators. Indeed, it is known
that both $H(a)$ and $H(\tilde{a})$ are compact if and only if $a\in QC$ (see, e.g., \cite[Theorem 2.54]{BS}).
Here, and what follows, $\tilde{a}(t):=a(t^{-1})$, $t\in \T$. 

\smallskip 
Sarason \cite{sarason1977}, generalizing earlier work of Gohberg/Krupnik \cite{GohKru69} and Douglas \cite{douglas1972banach},  established necessary and sufficient conditions
for Toeplitz operators $T(a)$ with $a\in PQC$ to be Fredholm.  This result is based on two ingredients.
Firstly, due to Widom's formula $T(ab)=T(a)T(b)+H(a)H(\tilde{b})$,  Toeplitz operators $T(a)$ with $a\in QC$ commute with  other Toeplitz operators $T(b)$, $b\in L^\iy(\T)$, modulo compact operators.
Hence $C^*$-algebras generated by Toeplitz operators can be localized over $QC$.
Secondly, in case of the $C^*$-algebra generated by Toeplitz operators $T(a)$ with $a\in PQC$, the local quotient algebras arising from the localization allow an explicit description, which is facilitated by the characterization of the fibers of the maximal ideal space $M(PQC)$ over maximal ideals   $\xi \in M(QC)$. These underlying results were also developed by Sarason \cite{sarason1975functions,sarason1977},
and we are going to recall them in what follows.

\smallskip
Let $\kA$ be a commutative  $C^*$-algebra, and let $\kB$ be a $C^*$-subalgebra such that both contain the same unit element.
Then there is a natural continuous map between the maximal ideal spaces,
$$
\pi: M(\kA) \to M(\kB), \qquad \alpha \mapsto \alpha|_{\kB}
$$
defined via the restriction. For $\beta\in M(\kB)$ introduce
$$
M_\beta(\kA)= \{\; \alpha\in M(\kA)\;:\; \alpha|_{\kB}=\beta\,\} = \pi^{-1}(\beta),
$$
which is called the {\em fiber of $M(\kA)$ over $\beta$}. The fibers $M_\beta(\kA)$ are compact subsets of $M(\kA)$, 
and $M(\kA)$ is the disjoint union of all $M_\beta(\kA)$.
Because $\kA$ and $\kB$ are $C^*$-algebras,  $\pi$ is surjective, and therefore each fiber $M_\beta(\kA)$ is non-empty
(see, e.g.,  \cite[Sect.~1.27]{BS}).

Corresponding to the embeddings between the $C^*$-algebras $C(\T)$, $QC$, $PC$, and $PQC$, which are depicted in first diagram below,
there are natural maps between the maximal ideal spaces shown in the second diagram:
\begin{center}
\begin{tikzpicture}[thick,scale=1]
 \matrix (m) [matrix of math nodes,row sep=2em,column sep=4em,minimum width=3em]
  {
     PQC & {QC}&   M(PQC)  & M({QC})\\
     PC & {C}(\T)  &  M(PC)\cong {\T\times\{+1,-1\} }& M({C(\T)})\cong \T \\};
  \path[-stealth]
        (m-1-2) edge node [above] {}  (m-1-1)
        (m-2-2) edge node [above] {} (m-2-1)
        (m-2-1) edge node [left]  {} (m-1-1)
        (m-2-2) edge node [left]  {} (m-1-2)
        (m-1-3) edge node [above] 
        					{}  (m-1-4)
        (m-1-3) edge node [below]  {} (m-2-3)
        (m-2-3) edge node [above]  
        					{}  (m-2-4)
        (m-1-4) edge node [below]  {} (m-2-4);
\end{tikzpicture}
\end{center}
Therein the identification of $y\in M(PC)$ with $(\tau,\sigma)\in\T\times \{+1,-1\}$ is made through
$y(f)=f(\tau \pm 0)$ for $\sigma=\pm1$, $f\in PC$. 

Let $M_\tau(QC)$ stand for the fiber of $M(QC)$ over $\tau\in\T$, i.e., 
$$
  M_\tau(QC) = \{\,\xi\in M(QC): \xi(f) = f(\tau) \mbox{ for all } f\in C(\mathbb{T})\,\},
$$       			 
and define
$$
  M_\tau^\pm(QC)=\Big\{\, \xi\in M(QC)\,:\, \xi(f)=0 \mbox{ whenever }
\limsup_{t\to \tau\pm 0} |f(t)|=0 \mbox{ and } f\in QC\;\Big\}.
$$
Both $M_\tau^+(QC)$ and $M_\tau^-(QC)$ are closed subsets of $M_\tau(QC)$. Sarason introduced another subset $M_\tau^0(QC)$ (to be defined in \eqref{Mtau0} below) and established the following result (see \cite{sarason1977}, or  \cite[Prop.~3.34]{BS}).

\begin{prop}
\label{prop52}
Let $\tau\in \mathbb{T}$. Then
\begin{equation}
        M_\tau^0(QC) = M_\tau^+(QC)\cap M_\tau^-(QC), \quad  M_\tau^+(QC)\cup M_\tau^-(QC) = M_\tau(QC).
\end{equation}
\end{prop}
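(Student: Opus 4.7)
The plan is to tackle the two identities in opposite order: first the union formula $M_\tau(QC)=M_\tau^+(QC)\cup M_\tau^-(QC)$, which carries the analytic content, and then the intersection formula, which I expect to fall out of the definition \eqref{Mtau0} once it is unfolded. The inclusions $M_\tau^\pm(QC)\subseteq M_\tau(QC)$ are immediate, since any $h\in C(\T)$ with $h(\tau)=0$ satisfies $\limsup_{t\to\tau\pm 0}|h(t)|=0$, so $\xi\in M_\tau^\pm(QC)$ must agree with point evaluation at $\tau$ on $C(\T)$.

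For the reverse inclusion I would argue by contradiction. If $\xi\in M_\tau(QC)$ lies in neither $M_\tau^+(QC)$ nor $M_\tau^-(QC)$, then there exist $f,g\in QC$ with $\limsup_{t\to\tau+0}|f(t)|=0$, $\limsup_{t\to\tau-0}|g(t)|=0$, and $\xi(f)\xi(g)\neq 0$. The product $h:=fg\in QC$ satisfies $\xi(h)\neq 0$ by multiplicativity, yet its two-sided essential limsup vanishes: on each side of $\tau$ one factor vanishes essentially while the other stays uniformly bounded. Everything therefore reduces to the key lemma below, which I expect to be the main obstacle.

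Key lemma: if $h\in QC$ and $\limsup_{t\to\tau}|h(t)|=0$ (from both sides), then $\xi(h)=0$ for every $\xi\in M_\tau(QC)$. I would prove it using the $C^*$-structure of $QC$. Since $|h|=(h\bar h)^{1/2}\in QC$ and $\xi$ is a $*$-homomorphism, $|\xi(h)|^2=\xi(|h|^2)=\xi(|h|)^2$, so it suffices to show $\xi(|h|)=0$. Fix $\eps>0$, pick an open arc $U\ni\tau$ on which $|h|\le\eps$ a.e., and a continuous cutoff $\varphi:\T\to[0,1]$ with $\varphi(\tau)=0$ and $\varphi\equiv 1$ on $\T\setminus U$. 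Then
\[
|h|\;\le\;\|h\|_\iy\,\varphi+\eps\quad\text{a.e.\ on } \T,
\]
and applying $\xi$, which is positive on the $C^*$-algebra $QC$ and sends $\varphi$ to $\varphi(\tau)=0$, yields $\xi(|h|)\le\eps$. Letting $\eps\to 0$ completes the lemma and closes the contradiction.

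For the intersection identity $M_\tau^0(QC)=M_\tau^+(QC)\cap M_\tau^-(QC)$, the precise argument depends on \eqref{Mtau0}, which is not shown in the excerpt. Once it is available, I expect one inclusion to be immediate (the defining two-sided vanishing condition being stronger than each one-sided condition) and the other to follow from a multiplicative trick: the product of a right-vanishing and a left-vanishing QC function is two-sided vanishing, so for $\xi\in M_\tau^0(QC)$ multiplicativity forces $\xi(fg)=\xi(f)\xi(g)=0$, and a careful choice of the companion factor then shows that $\xi$ already annihilates each one-sided ideal. The main analytic obstacle throughout is the key lemma: the positivity of characters on the $C^*$-algebra $QC$ is exactly what lets an a.e.\ $L^\iy$ inequality pass through $\xi$, a step with no purely Banach-algebraic analogue.
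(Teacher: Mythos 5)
Your proof of the union identity $M_\tau(QC)=M_\tau^+(QC)\cup M_\tau^-(QC)$ is correct. Both inclusions are handled properly, and your key lemma---that $\xi(h)=0$ for every $\xi\in M_\tau(QC)$ whenever $h\in QC$ has vanishing essential limsup at $\tau$ from both sides---is proved cleanly via positivity of characters on the $C^*$-algebra $QC$ together with a continuous cutoff. (The paper itself gives no proof of Proposition \ref{prop52}; it cites Sarason and B\"ottcher--Silbermann, so there is no in-text argument to compare against.)

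The genuine gap is the intersection identity, which you never actually prove. The definition you were missing is \eqref{Mtau0}: $M_\tau^0(QC)=M(QC)\cap\clos_{\,QC^*}(\Lambda\times\{\tau\})$, the characters lying in the weak-$^*$ closure of the moving-average functionals $\delta_{\lambda,\tau}$. Your guessed ``two-sided vanishing'' definition of $M_\tau^0(QC)$ is not merely imprecise but inconsistent with your own key lemma, which shows that \emph{every} $\xi\in M_\tau(QC)$ annihilates every two-sided vanishing $h\in QC$; that definition would therefore give $M_\tau^0(QC)=M_\tau(QC)$, and combined with the first identity this would force $M_\tau^+(QC)=M_\tau^-(QC)=M_\tau(QC)$, which is false. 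With the correct definition, the inclusion $M_\tau^0\subseteq M_\tau^+\cap M_\tau^-$ is not immediate: it needs Sarason's vanishing-integral-gap lemma for $VMO$ functions (Lemma \ref{lem53}(a)) to convert one-sided vanishing of $q$ at $\tau$ into $\delta_{\lambda,\tau}(q)\to0$ as $\lambda\to\infty$, together with the fact that only nets with unbounded $\lambda$ can converge to a character. The reverse inclusion $M_\tau^+\cap M_\tau^-\subseteq M_\tau^0$ is harder still and is missing entirely; the ``multiplicative trick'' you sketch is not coherent as stated, since $\xi(f)\xi(g)=0$ for a right-vanishing $f$ and a left-vanishing $g$ does not isolate $\xi(f)=0$ without first producing a $g$ with $\xi(g)\neq0$, which begs the question.
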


The previous definitions and observations are necessary to analyze the fibers of $M(PQC)$ over $\xi\in M(QC)$.
In view of the  second  diagram above, for given $z\in M(PQC)$ we can define the restrictions
$\xi=z|_{QC}$, $z|_{C(\T)}\cong \tau \in \T$, and $y=z|_{PC}\cong(\tau,\sigma)\in\T\times \{+1,-1\}$.
Note that $\xi\in M_\tau(QC)$.  Consequently, one has a natural map
\begin{equation}\label{z-xi-si}
    z \in M(PQC)    \mapsto    (\xi,\sigma)\in M(QC)\times \{+1,-1\}.
\end{equation}
This map is injective because $PQC$ is generated by $PC$ and $QC$. Therefore, $M(PQC)$ can be identified
with a subset of $M(QC)\times \{+1,-1\}$. With this identification, the fibers 
$M_\xi(PQC)=\{\, z\in M(PQC)\,:\, z|_{QC}=\xi\, \}$ are given as follows (see \cite{sarason1975functions}, or \cite[Thm.~3.36]{BS}).

\begin{thm}
\label{thm53}
Let $\xi\in M_\tau(QC)$, $\tau\in \T$. Then
\begin{itemize}
\item[(a)]
$M_\xi(PQC)=\{\, (\xi,+1)\,\}$ for $\xi\in M_\tau^+(QC)\setminus M_\tau^0(QC)$;
\item[(b)]
$M_\xi(PQC)=\{\, (\xi,-1)\,\}$ for $\xi\in M_\tau^-(QC)\setminus M_\tau^0(QC)$;
\item[(c)]
$M_\xi(PQC)=\{\, (\xi,+1),(\xi,-1)\,\}$ for $\xi\in M_\tau^0(QC)$.
\end{itemize}
\end{thm}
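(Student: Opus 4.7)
The plan is to reduce the fiber question to characters of the closed $C^*$-subalgebra $QC[\chi_\tau]\subset PQC$ generated by $QC$ and a single fixed jump function $\chi_\tau\in PC$ satisfying $\chi_\tau(\tau+0)=1$ and $\chi_\tau(\tau-0)=0$, to construct the required characters explicitly from $\xi$, and then to lift them to $PQC$ via the surjectivity of the natural map between maximal ideal spaces. Since $\chi_\tau^2=\chi_\tau$, any character $z$ on $PQC$ satisfies $z(\chi_\tau)\in\{0,1\}$, and this value agrees with the parameter $\sigma$ in the injective map \eqref{z-xi-si}, namely $z(\chi_\tau)=1$ iff $\sigma=+1$. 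So the task is to determine, for each $\xi\in M_\tau(QC)$, which values of $z(\chi_\tau)$ are compatible with $z|_{QC}=\xi$.

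The technical crux is the following estimate (with its symmetric version for $M_\tau^-(QC)$ and $\tau-0$):
\[
|\xi(h)|\;\leq\;\limsup_{t\to\tau+0}|h(t)|,\qquad h\in QC,\ \xi\in M_\tau^+(QC).
\]
I would prove this by a positive-part cut-off. With $c=\limsup_{t\to\tau+0}|h(t)|$ and $\varepsilon>0$, the element $F:=(|h|^2-c^2-\varepsilon)_+$ lies in $QC$ (as $QC$ is a $C^*$-algebra, closed under self-adjoint functional calculus) and satisfies $\limsup_{t\to\tau+0}|F(t)|=0$, so $\xi(F)=0$ by definition of $M_\tau^+(QC)$. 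Since $|h|^2-F=\min(|h|^2,\,c^2+\varepsilon)\geq0$, it follows that
\[
|\xi(h)|^2 \;=\; \xi(|h|^2) \;=\; \xi(|h|^2-F)\;\leq\; \bigl\||h|^2-F\bigr\|_\infty\;\leq\; c^2+\varepsilon,
\]
and $\varepsilon\to0$ gives the inequality.

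Armed with this estimate, I would construct, for $\xi\in M_\tau^+(QC)$, the functional $z^+(f+g\chi_\tau):=\xi(f)+\xi(g)$ on the subalgebra $QC+\chi_\tau QC\subset L^\infty$. Well-definedness: if $f+g\chi_\tau=f'+g'\chi_\tau$, then $(f-f')+(g-g')=(g-g')(1-\chi_\tau)\in QC$ vanishes a.e.\ on the $+$-arc, so its $\limsup$ from $\tau+0$ is $0$, and $\xi$ annihilates it. Multiplicativity is direct from $\chi_\tau^2=\chi_\tau$, and the bound $|z^+(f+g\chi_\tau)|\leq\|f+g\chi_\tau\|_\infty$ follows from the estimate applied to $h=f+g\in QC$, since $f+g\chi_\tau$ coincides with $f+g$ on the $+$-side. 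Thus $z^+$ extends continuously to a character on $QC[\chi_\tau]$, which lifts to a character of $PQC$ with $\sigma=+1$ by the surjectivity of $M(PQC)\to M(QC[\chi_\tau])$. The analogous construction with $1-\chi_\tau$ yields a $\sigma=-1$ character for every $\xi\in M_\tau^-(QC)$.

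For the converse (ruling out other characters in the fiber), suppose $z\in M_\xi(PQC)$ has $\sigma=+1$ and take $f\in QC$ with $\limsup_{t\to\tau+0}|f|=0$. Then $f\chi_\tau\in PQC$ satisfies $\limsup_{t\to\tau\pm0}|f\chi_\tau|=0$, and multiplying by a continuous bump $\varphi\in C(\T)$ with $\varphi(\tau)=1$ supported where $|f\chi_\tau|<\varepsilon$ gives $z(f\chi_\tau)=z(\varphi f\chi_\tau)$, with $|z(\varphi f\chi_\tau)|\leq\|\varphi f\chi_\tau\|_\infty<\varepsilon$; hence $z(f\chi_\tau)=0$, which forces $\xi(f)=z(f)z(\chi_\tau)=z(f\chi_\tau)=0$ and therefore $\xi\in M_\tau^+(QC)$. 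Combining this with the symmetric statement for $\sigma=-1$, with Proposition~\ref{prop52}, and with the injectivity of $z\mapsto(\xi,\sigma)$ yields exactly the three cases (a)--(c). The main obstacle is the Key Lemma and the verification that the explicitly-defined $z^+$ is both well-defined and norm-bounded on $QC+\chi_\tau QC$; once these structural statements are secured, the remainder is a bookkeeping exercise.
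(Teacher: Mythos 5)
Your proof is correct, but it is not the paper's own argument: the paper cites Theorem~\ref{thm53} from Sarason \cite{sarason1977} and \cite[Thm.~3.36]{BS} without proof, and the nearby Lemmas~\ref{lem53}--\ref{cor54} reflect the VMO/integral-gap technology those references use to handle such fiber questions. You replace that real-variable machinery with a purely $C^*$-algebraic Key Lemma, $|\xi(h)|\le\limsup_{t\to\tau+0}|h(t)|$ for $\xi\in M_\tau^+(QC)$ and $h\in QC$, obtained by the positive-part cut-off $F=(|h|^2-c^2-\varepsilon)_+\in QC$ via continuous functional calculus; since $\xi$ is a $*$-character, $|\xi(h)|^2=\xi(|h|^2)=\xi(|h|^2-F)\le c^2+\varepsilon$. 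That single estimate yields in one stroke both well-definedness and contractivity of the explicit functional $z^+(f+g\chi_\tau)=\xi(f)+\xi(g)$ on $QC+\chi_\tau QC$ (the key observation being that $(g-g')(1-\chi_\tau)=(f-f')+(g-g')$ lies in $QC$ and vanishes from the $+$-side when $f+g\chi_\tau=f'+g'\chi_\tau$), after which the lifting along the surjection $M(PQC)\to M(QC[\chi_\tau])$, the bump-function converse forcing $\xi\in M_\tau^\pm(QC)$ when $z(\chi_\tau)\in\{1,0\}$, and the case split via Proposition~\ref{prop52} together with the injectivity of \eqref{z-xi-si} all check out. This is a genuinely different and more elementary key lemma than the classical one. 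What the VMO route buys in exchange is the stronger structural conclusion that, for $\xi\in M_\tau^0(QC)$ and $q\in\ker\xi$, the product $q\chi_\tau$ actually lies in $QC$ locally (cf.\ Lemma~\ref{lem53}(c) and the analogous Lemma~\ref{cor54}, which the present paper genuinely needs for Proposition~\ref{p3.1} and Theorem~\ref{t3.3}); your norm estimate controls the value of $\xi$ but does not by itself recover that membership statement.
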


In order to describe the content of this paper, let us consider what happens if one wants to develop
a Fredholm theory for operators from the $C^*$-algebra generated by Toeplitz and Hankel operators with $PQC$-symbols \cite{Si87}.
In this situation, one cannot use localization over $QC$
because the commutativity property fails. However, one can localize over 
$$\tQC=\{\,a\in QC\,:\, a=\tilde{a}\,\},$$ 
the $C^*$-algebra of all even quasicontinuous functions. Indeed, due to the identity $H(ab)=T(a)H(b)+H(a) T(\tilde{b})$, any
$T(a)$ with $a\in \tQC$ commutes with any $H(b)$, $b\in L^\iy(\T)$, modulo compact operators.
When faced with the problem of identifying the local quotient algebras, it is necessary to 
understand the fibers of $M(PQC)$ over $\eta\in M(\tQC)$. This is what this paper is about.

\smallskip 
When $\tQC$ and  the $C^*$-algebra $\tC(\T)$ of all even continuous functions are added to the picture,
one arrives at the following diagrams:
\begin{center}
\begin{tikzpicture}[thick,scale=1]
 \matrix (m) [matrix of math nodes,row sep=2em,column sep=4em,minimum width=3em]
  {
     PQC & QC & \tQC& M(PQC) &  M(QC)  & M(\widetilde{QC})\\
     PC & C(\T) & \widetilde{C}(\T)  & M(PC)&  M(C)\cong {\T }& M(\widetilde{C})\cong \overline{\T_+} \\};
  \path[-stealth]
     %  (m-y-x)
        (m-1-2) edge node [above] {}  (m-1-1)
        (m-1-3) edge node [above] {} (m-1-2)
        (m-2-1) edge node [left]  {} (m-1-1)
        (m-2-2) edge node [left]  {} (m-1-2)
        (m-2-3) edge node [left]  {} (m-1-3)
        (m-2-2) edge node [above] {} (m-2-1)
        (m-2-3) edge node [above] {} (m-2-2) 
        (m-1-4) edge node [above] {}  (m-1-5)
        (m-1-5) edge node [above] {\footnotesize{$\Psi$}} (m-1-6)
        (m-1-4) edge node [left]  {} (m-2-4)
        (m-1-5) edge node [left]  {} (m-2-5)
        (m-1-6) edge node [left]  {} (m-2-6)        
        (m-2-4) edge node [above] {}  (m-2-5)
        (m-2-5) edge node [above] {\footnotesize{$\Psi'$}} (m-2-6);
\end{tikzpicture}
\end{center}
As before, the diagram on the left shows the embeddings of the $C^*$-algebras, and the one on the right 
displays the corresponding (surjective) mappings between the maximal ideal spaces. Here $\T_+=\{\,t\in\T\,:\, \mathrm{Im}(t)>0\,\}$
and $\overline{\T_+}=\T_+\cup\{+1,-1\}$. The map $\Psi'$ is defined in such a way that the pre-image
of $\tau\in\overline{ \T_+}$ equals the set $\{\tau,\overline{\tau}\}$, which consists of either one or two points.

Recall that Theorem \ref{thm53} describes the fibers of $M(PQC)$ over $\xi \in M(QC)$. Hence if we want to 
understand the fibers of $M(PQC)$ over $\eta\in M(\tQC)$,  
it is sufficient to analyze the fibers of $M(QC)$ over $\eta\in M(\tQC)$.
Let
\begin{equation}\label{xi-hat}
        \Psi: M(QC)\to M(\widetilde{QC}), \enskip\xi\mapsto \hat{\xi}:=\xi|_{\tQC}\,,
\end{equation}
be the (surjective) map shown in the previous diagram. 
For $\eta\in M(\widetilde{QC})$ define
        \begin{equation}
        M^\eta(QC) = \{\,\xi\in M(QC): \hat{\xi} = \eta\,\},
        \end{equation}
the fiber of $M(QC)$ over $\eta$.  
Let us also define the fibers of $M(\tQC)$ over $\tau\in \overline{\T_+}$,
\begin{equation}\label{f.tau.tQC}
         M_\tau(\widetilde{QC}) = \{\,\eta\in M(\widetilde{QC}): \eta(f) = f(\tau) \mbox{ for all } f\in \widetilde{C}(\mathbb{T}) \,\}.
\end{equation}
Notice that we have the disjoint unions
\begin{equation}\label{unions}
    M(QC)=\bigcup_{\eta\in M(\widetilde{QC})} M^\eta(QC),\quad
    M(QC) =\bigcup_{\tau\in\T} M_{\tau}(QC),\quad
    M(\tQC)=\bigcup_{\tau\in\overline{\T_+}} M_\tau (\tQC).
\end{equation}
Furthermore, it is easy to see that  $\Psi$ maps 
\begin{equation}\label{eq1.7}
    M_\tau(QC)\cup M_{\bar{\tau}}(QC) 
\end{equation}
onto $M_\tau(\tQC)$ for each $\tau\in \overline{\T_+}$.

\smallskip
The main results of this paper concern the description of the fibers $M^\eta(QC)$ and the decomposition of
$M_\tau(\tQC)$ into disjoint sets, analogous to the decomposition of $M_\tau(QC)$
into the disjoint union of
\begin{equation}\label{f1.8}
   M^0_\tau(QC), \quad M^+_\tau(QC)\setminus M^0_\tau(QC), \quad \mbox{and}\quad 
   M^-_\tau(QC)\setminus M^0_\tau(QC)
\end{equation}
(see Proposition \ref{prop52}). This will be done in Section 3. In Section 2 we establish auxilliary results.
In Section 4 we decribe the fibers $M^\eta(PQC)$ of  $M(PQC)$ over $\eta\in M(\tQC)$.

\smallskip
Some aspects of the relationship between $M(QC)$ and $M(\tQC)$ were already mentioned by Power \cite{Pow80}.
They were used by Silbermann \cite{Si87} to established a Fredholm theory for operators from the $C^*$-algebra 
generated by Toeplitz and Hankel operators with $PQC$-symbols. 
Our motivation for presenting the results of this paper comes from the goal
of establishing a Fredholm theory and a stability theory for the finite section method 
for operators taken from the $C^*$-algebra generated by the singular integral operator on $\T$, the flip operator, and multiplication operators
by (operator-valued) $PQC$-functions \cite{EZ}. This generalizes previous work \cite{ehrhardt1996symbol,ehrhardt1996finite} and requires the results established here.

%%%%%%%%%%%%%%%%%%%%%%%%%%%%
%%%%%%%%%%%%%%%%%%%%%%%%%%%%

\section{Approximate identities and VMO}
\label{sec:2}

In order to examine the relationship between $M(QC)$ and $M(\widetilde{QC})$,  we need to recall some results and definitions concerning 
$QC$ and $M(QC)$. 
For $ \tau = e^{i\theta}\in \mathbb{T}$ and $ \lambda\in\Lambda:=[1,\infty)$ let us define the moving average,
\begin{equation}\label{mlambda}
        (m_\lambda a)(\tau) = \frac{\lambda}{2\pi}\int_{\theta-\pi/\lambda}^{\theta+\pi/\lambda}a(e^{ix})\,dx.
 \end{equation}
Since each pair $(\lambda,\tau)\in \Lambda\times \T$ induces a bounded linear functional  $\delta_{\lambda,\tau}\in QC^*$,
 \begin{equation}
        \delta_{\lambda,\tau}:QC\to \mathbb{C},  \quad a\mapsto (m_\lambda a)(\tau),\label{deltam}
 \end{equation}
the set $\Lambda\times \T$ can be identified with a subset of $QC^*$. 
In fact, we have the following result, where we consider the dual space $QC^*$ with the weak-$^*$ topology (see \cite[Prop.~3.29]{BS}).

\begin{prop}\label{p2.1}
$M(QC)= (\clos_{\,QC^*}(\Lambda\times \mathbb{T}))\setminus (\Lambda \times \mathbb{T}).$
\end{prop}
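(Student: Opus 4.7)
The plan is to prove the two inclusions of the stated equality separately, both relying on a single approximate-multiplicativity estimate for the moving averages. The key lemma is: for all $a, b \in QC$,
\[
\lim_{\lambda \to \infty} \sup_{\tau \in \T} \bigl| m_\lambda(ab)(\tau) - m_\lambda a(\tau)\, m_\lambda b(\tau) \bigr| = 0.
\]
To prove it, set $c = m_\lambda a(\tau)$, $d = m_\lambda b(\tau)$ and use the identity
\[
m_\lambda(ab)(\tau) - c\, d = m_\lambda\!\bigl((a-c)(b-d)\bigr)(\tau);
\]
Cauchy--Schwarz then bounds the right-hand side by $\bigl(m_\lambda(|a-c|^2)(\tau)\cdot m_\lambda(|b-d|^2)(\tau)\bigr)^{1/2}$. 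Each factor is the $L^2$-oscillation of $a$ or $b$ over an arc of length $2\pi/\lambda$, and since $QC = VMO \cap L^\infty$, both tend to zero uniformly in $\tau$ as $\lambda \to \infty$. Establishing this uniform-in-$\tau$ statement, which directly exploits the VMO structure, will be the main obstacle.

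For the inclusion $M(QC) \supseteq \clos_{QC^*}(\Lambda \times \T) \setminus (\Lambda \times \T)$, note first that $\|\delta_{\lambda, \tau}\| \le 1$, so by Banach--Alaoglu the weak-$^*$ closure $K := \clos_{QC^*}(\Lambda \times \T)$ is compact. A direct computation (say with $a(e^{ix}) = e^{ix}$, where $m_\lambda(a^2)(\tau) \ne (m_\lambda a(\tau))^2$) shows that no $\delta_{\lambda, \tau}$ is multiplicative, so $(\Lambda \times \T) \cap M(QC) = \emptyset$. Now take $\mu \in K \setminus (\Lambda \times \T)$ and a net $\delta_{\lambda_\alpha, \tau_\alpha} \to \mu$ in weak-$^*$. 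If the $\lambda_\alpha$ stayed bounded, continuity of $(\lambda, \tau) \mapsto \delta_{\lambda, \tau}$ together with compactness of $\T$ would yield a subnet converging to some $\delta_{\lambda_0, \tau_0} \in \Lambda \times \T$, contradicting $\mu \notin \Lambda \times \T$. Hence $\lambda_\alpha \to \infty$ along a subnet, and the lemma yields $\mu(ab) = \mu(a)\mu(b)$ for all $a, b \in QC$; together with $\mu(1) = 1$ and $\mu(\bar a) = \overline{\mu(a)}$, this places $\mu$ in $M(QC)$.

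For the inclusion $M(QC) \subseteq \clos_{QC^*}(\Lambda \times \T) \setminus (\Lambda \times \T)$, introduce the $*$-linear map $\Phi: QC \to C_b(\Lambda \times \T)$, $\Phi(a)(\lambda, \tau) = m_\lambda a(\tau)$, continuous in $(\lambda, \tau)$ with the natural topology on $\Lambda \times \T$. The key lemma says $\Phi(ab) - \Phi(a)\Phi(b)$ lies in the ideal $C_0(\Lambda \times \T)$ of functions with $\lim_{\lambda \to \infty} \sup_\tau |f(\lambda, \tau)| = 0$, so $\Phi$ descends to a $*$-homomorphism $\tilde\Phi: QC \to C_b(\Lambda \times \T)/C_0(\Lambda \times \T)$. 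By Lebesgue differentiation, $m_\lambda a(\tau) \to a(\tau)$ at Lebesgue points, so $\limsup_\lambda \|m_\lambda a\|_\infty = \|a\|_\infty$, making $\tilde\Phi$ isometric. Given $\xi \in M(QC)$, the character $\xi \circ \tilde\Phi^{-1}$ on $\tilde\Phi(QC)$ extends, by surjectivity of the restriction map between maximal ideal spaces of commutative unital $C^*$-algebras, to a character $\zeta$ of the quotient. Characters of $C_b(\Lambda \times \T)/C_0(\Lambda \times \T)$ are exactly the points of the Stone--Cech corona $\beta(\Lambda \times \T) \setminus (\Lambda \times \T)$, each of which is a weak-$^*$ limit of point evaluations at some net $(\lambda_\alpha, \tau_\alpha)$ with $\lambda_\alpha \to \infty$. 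Composing with $\Phi$, this exhibits $\xi$ as the weak-$^*$ limit of $\delta_{\lambda_\alpha, \tau_\alpha}$, placing $\xi$ in $K \setminus (\Lambda \times \T)$ as required.
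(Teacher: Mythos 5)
Your proof is correct, but note that the paper itself offers no proof of Proposition \ref{p2.1} --- it is quoted verbatim from B\"ottcher--Silbermann \cite[Prop.~3.29]{BS}. Your argument is a self-contained derivation in the same spirit as the one in that reference: the decisive input in both is the approximate multiplicativity of the moving averages on $QC = VMO\cap L^\infty$, which you establish cleanly via the identity $m_\lambda(ab)(\tau)-cd = m_\lambda((a-c)(b-d))(\tau)$, Cauchy--Schwarz, and the elementary bound $\tfrac{1}{|I|}\int_I|a-a_I|^2\le 2\|a\|_\infty\cdot\tfrac{1}{|I|}\int_I|a-a_I|$ that converts $L^1$-VMO smallness into the needed $L^2$-smallness without invoking John--Nirenberg. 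The containment $\clos(\Lambda\times\T)\setminus(\Lambda\times\T)\subseteq M(QC)$ is the routine direction; your net argument (either a subnet of the $\lambda_\alpha$ stays bounded, forcing the limit into $\Lambda\times\T$, or $\lambda_\alpha\to\infty$ and the key lemma kicks in) is sound. For the converse containment, your reformulation via the isometric $*$-homomorphism $\tilde\Phi:QC\hookrightarrow C_b(\Lambda\times\T)/C_0(\Lambda\times\T)\cong C(\beta(\Lambda\times\T)\setminus(\Lambda\times\T))$ and extension of characters is a somewhat more abstract packaging than what appears in \cite{BS}, but it is correct and arguably more transparent: isometry follows from Lebesgue differentiation, surjectivity of the restriction map on maximal ideal spaces is standard, and corona points are limits of nets escaping to infinity. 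One small thing worth making explicit (you use it implicitly) is that the weak-$^*$ limit $\mu$ in the first inclusion is automatically a nonzero bounded linear functional --- linearity and $\|\mu\|\le 1$ pass to the limit, and $\mu(1)=1$ --- so multiplicativity alone then puts $\mu$ in $M(QC)$.
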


For $\tau\in\T$, let $M_\tau^0(QC)$ denote the points in $M(QC)$ that lie in the weak-$^*$ closure of $\Lambda\times \{\tau\}$ regarded as a subset of $QC^*$, 
\begin{equation}  
        M_\tau^0(QC) = M(QC)\cap  \clos_{\,QC^*}(\Lambda \times \{\tau\}) .\label{Mtau0}
\end{equation}
Obviously,  $M_\tau^0(QC)$ is a compact subset of the fiber $M_\tau(QC)$. 
We remark that here and in the above proposition one can use arbitrary  approximate identities (in the sense of Section 3.14 in \cite{BS})
instead of the moving average (see \cite[Lemma~3.31]{BS}).

\medskip
For $a\in L^1(\T)$ and $\tau = e^{i\theta}\in \mathbb{T}$, the \emph{integral gap} $\gamma_\tau(a)$ of $a$ at $\tau$ is defined by
\begin{equation}
        \gamma_\tau(a) :=\limsup_{\delta \to +0} 
        \left|\frac{1}{\delta}\int_\theta^{\theta+\delta}a(e^{ix})\,dx - 
        \frac{1}{\delta}\int_{\theta-\delta}^\theta a(e^{ix})\,dx\right|.
\end{equation}
It is well-known \cite{sarason1975functions} that $QC = VMO\cap L^\infty(\T)$, where $VMO\subset L^1(\T)$ refers to the class
of all {\em functions with vanishing mean oscillation} on the unit circle $\T$.  We will not recall its definition here, but refer to  \cite{sarason1975functions,sarason1977,BS}.
In the following lemma (see  \cite{sarason1977} or \cite[Lemma~3.33]{BS}), $VMO(I)$ stands for the class of functions with vanishing mean oscillation on an open subarc $I$ of $\T$. Furthermore, we identify a function $q\in QC$ with its Gelfand transform, a continuous function on $M(QC)$.

\begin{lem}\label{lem53}\hfill
\begin{enumerate}[label=(\alph*)]
\item If $q\in VMO$, then $\gamma_\tau(q) = 0$ for each $\tau\in \mathbb{T}$.
\item If $q\in VMO(a,\tau)\cap VMO(\tau,b)$ and $\gamma_\tau(q)=0$, then $q\in VMO(a,b)$.
\item If $q\in QC$ such that  $q|_{M_\tau^0(QC)} = 0$ and if $p\in PC$, then $\gamma_\tau(pq)=0$.
\end{enumerate}
\end{lem}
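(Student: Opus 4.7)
The plan is to prove the three parts in order: (a) follows directly from the VMO definition, (b) reduces to controlling symmetric intervals around $\tau$ and invokes part (a), and (c) combines part (a) with an interpretation of the hypothesis in terms of moving averages.

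For (a), with $\tau=e^{i\theta}$ and small $\delta>0$, set $I_+=[\theta,\theta+\delta]$, $I_-=[\theta-\delta,\theta]$, $I=I_-\cup I_+$, and write $A_\pm=\tfrac{1}{\delta}\int_{I_\pm}q$ and $A=\tfrac{1}{2\delta}\int_I q=(A_++A_-)/2$. Then $|A_+-A_-|=2|A_+-A|\leq\tfrac{2}{\delta}\int_{I_+}|q-A|\,dx\leq\tfrac{2}{\delta}\int_I|q-A|\,dx$, and the right-hand side vanishes as $\delta\to0$ because $q\in VMO$.

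For (b), intervals $I\subset(a,b)$ not containing $\tau$ pose no problem, so I consider $I=[\theta-\alpha,\theta+\beta]\subset(a,b)$ straddling $\tau$ (identifying arcs with angle intervals). Let $m=\max(\alpha,\beta)$ and $I'=[\theta-m,\theta+m]$; then $I\subset I'$ and $|I'|\leq 2|I|$, giving $\frac{1}{|I|}\int_I|q-q_{I'}|\,dx\leq\frac{|I'|}{|I|}\cdot\mathrm{osc}_{I'}(q)\leq 2\,\mathrm{osc}_{I'}(q)$. This estimate bounds both $|q_I-q_{I'}|$ and $\mathrm{osc}_I(q)$ by a multiple of $\mathrm{osc}_{I'}(q)$, so it suffices to prove that $\mathrm{osc}_{I'}(q)\to 0$ for symmetric intervals. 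Splitting $I'=I_-\cup I_+$ at $\theta$ and using $q_{I'}=(q_{I_-}+q_{I_+})/2$,
$$
\mathrm{osc}_{I'}(q)\leq\tfrac12\bigl(\mathrm{osc}_{I_-}(q)+\mathrm{osc}_{I_+}(q)\bigr)+\tfrac12|q_{I_+}-q_{I_-}|,
$$
where the first two terms vanish as $m\to 0$ by the hypothesis $q\in VMO(a,\tau)\cap VMO(\tau,b)$ and the last vanishes by $\gamma_\tau(q)=0$.

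For (c), the first step is to translate the hypothesis $q|_{M_\tau^0(QC)}=0$ into the analytic statement $(m_\lambda q)(\tau)\to 0$ as $\lambda\to\infty$; this follows from \eqref{Mtau0} together with the fact that any weak-$^*$ limit point of $\{\delta_{\lambda,\tau}:\lambda\to\infty\}$ lies in $M_\tau^0(QC)$. Writing $(m_\lambda q)(\tau)=\frac{1}{2\delta}\int_{\theta-\delta}^{\theta+\delta}q\,dx$ with $\delta=\pi/\lambda$ and combining with $\gamma_\tau(q)=0$ from part (a), both the sum and the difference of the one-sided averages $\tfrac{1}{\delta}\int_\theta^{\theta+\delta}q$ and $\tfrac{1}{\delta}\int_{\theta-\delta}^\theta q$ tend to $0$, so each individually tends to $0$. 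Finally, given $\eps>0$, choose $\eta>0$ with $|p(e^{ix})-p(\tau+0)|<\eps$ for $x\in(\theta,\theta+\eta)$ and $|p(e^{ix})-p(\tau-0)|<\eps$ for $x\in(\theta-\eta,\theta)$; for $0<\delta<\eta$ this yields
$$
\Bigl|\tfrac{1}{\delta}\int_\theta^{\theta+\delta}pq\,dx - p(\tau+0)\tfrac{1}{\delta}\int_\theta^{\theta+\delta}q\,dx\Bigr|\leq\eps\|q\|_\iy,
$$
and an analogous bound on $(\theta-\delta,\theta)$. Letting first $\delta\to 0$ and then $\eps\to 0$ shows both one-sided averages of $pq$ vanish in the limit, so $\gamma_\tau(pq)=0$. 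The main delicacy I anticipate is the translation step opening (c), verifying that nonzero weak-$^*$ limits of moving-average functionals along $\lambda\to\infty$ would indeed land in $M_\tau^0(QC)$ rather than back in $\Lambda\times\{\tau\}$; everything else is an exercise in the VMO definition and in approximating $p\in PC$ by its one-sided limits.
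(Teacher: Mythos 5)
The paper does not prove Lemma~\ref{lem53}; it cites the result to Sarason \cite{sarason1977} and to \cite[Lemma~3.33]{BS}, so there is no internal proof to compare against. Judged on its own, your proof is correct and follows the standard VMO arguments one finds in those sources.

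Parts (a) and (b) are clean: in (a) the identity $|A_+-A_-|=2|A_+-A|$ together with $I_+\subset I$ and $|I|=2\delta$ gives $|A_+-A_-|\le 4\,\mathrm{osc}_I(q)$, which vanishes because $q\in VMO$; in (b) the passage from a straddling interval $I$ to the symmetric interval $I'\supset I$ with $|I'|\le 2|I|$, followed by the split of $\mathrm{osc}_{I'}(q)$ into the two one-sided oscillations plus the gap term, is exactly the right reduction. (A minor point worth stating explicitly: the one-sided intervals $I_\pm$ touch $\tau$ at an endpoint, but since this is a set of measure zero the hypotheses $q\in VMO(a,\tau)$, $q\in VMO(\tau,b)$ still control $\mathrm{osc}_{I_\pm}(q)$.)

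In (c), you correctly flag the only nontrivial step: upgrading $q|_{M_\tau^0(QC)}=0$ to the analytic statement $\delta_{\lambda,\tau}(q)\to 0$ as $\lambda\to\infty$. To close this, argue by contradiction: if $|\delta_{\lambda_n,\tau}(q)|\ge\eps$ along some $\lambda_n\to\infty$, pass to a weak-$^*$ convergent subnet with limit $F\in\clos_{QC^*}(\Lambda\times\{\tau\})$. Since the moving average of any $c\in C(\T)$ converges to $c(\tau)$ as $\lambda\to\infty$, the limit $F$ restricts to point evaluation at $\tau$ on $C(\T)$, which rules out $F=\delta_{\mu,\sigma}$ for any finite $\mu$; hence by Proposition~\ref{p2.1} we get $F\in M(QC)$, so $F\in M_\tau^0(QC)$ by \eqref{Mtau0}, and $|F(q)|\ge\eps$ contradicts the hypothesis. (Lemma~\ref{lem57} is in a similar spirit but not quite what you need here; the continuous-function argument is the cleanest way to rule out limits in $\Lambda\times\T$.) Once this is in place, combining $A_++A_-\to 0$ with $A_+-A_-\to 0$ from part (a), and then comparing $pq$ with $p(\tau\pm 0)q$ near $\tau$, finishes the proof exactly as you wrote it.
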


Let $\chi_+$ (resp., $\chi_-$) be the characteristic function of the upper (resp., lower) semi-circle. 
The next lemma is based on the preceeding lemma.

\begin{lem}\label{cor54}
Let $q\in QC$.
\begin{enumerate}[label=(\alph*)]
\item If $q$ is an odd function, i.e., $q(t) = -q(1/t)$, then $q|_{M_{1}^0(QC)} = 0$ and $q|_{M_{-1}^0(QC)} = 0$.
\item If $q|_{M_{\pm1}^0(QC)} = 0$, then $pq\in QC$ whenever $p\in PC\cap C(\mathbb{T}\setminus\{\pm1\})$.
\item If $q|_{M_{1}^0(QC)}=0$ and $q|_{M_{-1}^0(QC)}=0$, then $q\chi_+,q\chi_-\in QC$.
\end{enumerate}
\end{lem}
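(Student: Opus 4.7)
The three parts are treated in order, with (c) being essentially free once (b) is established.

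For (a), by the definition \eqref{Mtau0} of $M_\tau^0(QC)$ as the trace on $M(QC)$ of the weak-$^*$ closure of $\Lambda \times \{\tau\}$ in $QC^*$, it suffices to verify that $(m_\lambda q)(\pm 1) = 0$ for every $\lambda \in \Lambda$. At $\tau = 1$ the moving average integrates $q(e^{ix})$ over the symmetric interval $[-\pi/\lambda, \pi/\lambda]$, and the oddness $q(t) = -q(1/t)$ translates to $q(e^{ix}) = -q(e^{-ix})$, making the integrand an odd function of $x$, so the integral vanishes. At $\tau = -1$ the substitution $x = y + \pi$ recenters the interval around $0$; oddness now reads $q(-e^{iy}) = -q(-e^{-iy})$, again yielding an odd integrand.

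For (b) I plan to use the characterization $QC = VMO \cap L^\infty(\T)$; since $pq \in L^\infty(\T)$ is clear, the task reduces to showing $pq \in VMO$. The idea is to proceed locally. For any arc $J$ with $\overline{J} \subset \T \setminus \{\pm 1\}$ the restriction of $p$ to a neighborhood of $\overline{J}$ is continuous and bounded; writing
\[
pq - (pq)_J \;=\; p(\tau_J)(q - q_J) \;+\; (p - p(\tau_J))q \;-\; \bigl((p - p(\tau_J))q\bigr)_J
\]
at the center $\tau_J$ of $J$ and combining uniform continuity of $p$ on $\overline{J}$ with $q \in VMO$ shows that the mean oscillation of $pq$ over $J$ tends to $0$ as $|J| \to 0$. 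Hence $pq$ has vanishing mean oscillation at every point of $\T \setminus \{\pm 1\}$. From the hypothesis $q|_{M_{\pm 1}^0(QC)} = 0$ and Lemma \ref{lem53}(c) I get $\gamma_{\pm 1}(pq) = 0$. A first application of Lemma \ref{lem53}(b) at $\tau = 1$ glues the VMO property of $pq$ across the point $1$; a second application at $\tau = -1$ closes the remaining gap, and combined with the local VMO already established on the semicircles this yields $pq \in VMO$.

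Part (c) is an immediate specialization of (b) to $p = \chi_\pm$, which clearly lies in $PC \cap C(\T \setminus \{\pm 1\})$. The main technical obstacle is the gluing step in (b): one must manage the local-to-global passage for VMO on the circle with care, ensuring that two applications of Lemma \ref{lem53}(b) — one at each of the two singular points — are really enough to cover $\T$ and to promote pointwise vanishing mean oscillation to membership in the global class $VMO$. The oddness argument in (a) and the reduction (c) $\Leftarrow$ (b) are straightforward by comparison.
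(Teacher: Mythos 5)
Your treatment of (a) is the same as the paper's: an odd integrand over a symmetric interval gives $(m_\lambda q)(\pm 1)=0$ for all $\lambda$, hence $q$ vanishes on $\clos(\Lambda\times\{\pm1\})$.

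For (b) and (c), however, you have quietly re-read the statement. In the paper, part (b) is a \emph{single-point} statement: ``$\pm1$'' denotes one fixed sign, so the hypothesis is that $q$ vanishes on just one of $M_1^0(QC)$, $M_{-1}^0(QC)$ and $p$ has at most one jump, at that same point. The paper's proof is then short: $p$ restricted to $\T\setminus\{\pm1\}$ (parametrized as an interval, with the one-sided limits providing endpoint values) is \emph{uniformly} continuous, and the product of a $VMO$ function with a uniformly continuous function is $VMO$, so $pq\in VMO$ on the whole arc $\T\setminus\{\pm1\}$; then $\gamma_{\pm1}(pq)=0$ by Lemma~\ref{lem53}(c), and a \emph{single} application of Lemma~\ref{lem53}(b) glues across that one point. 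Your local-arc argument (``$\overline{J}\subset\T\setminus\{\pm1\}$'') gives only interior vanishing mean oscillation and does not by itself provide membership in $VMO(a,\tau)\cap VMO(\tau,b)$ up to the endpoints, which is what Lemma~\ref{lem53}(b) actually needs; the uniform-continuity-on-the-whole-arc observation is what makes the paper's version tight, and is worth stating explicitly.

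More importantly, under the paper's reading of (b), part (c) is \emph{not} an immediate specialization to $p=\chi_\pm$: the function $\chi_+$ is discontinuous at both $1$ and $-1$, so it does not lie in $PC\cap C(\T\setminus\{1\})$ or in $PC\cap C(\T\setminus\{-1\})$. This is exactly the point of (c) having a stronger hypothesis (vanishing at both $M_1^0$ and $M_{-1}^0$). The paper's proof of (c) inserts the missing step: choose $c_1,c_{-1}\in C(\T)$ with $c_1+c_{-1}=1$ and $c_{\pm1}$ vanishing near $\pm1$, write $q\chi_+ = (qc_1)\chi_+ + (qc_{-1})\chi_+$, and note that $c_1\chi_+$ has a jump only at $-1$ and $c_{-1}\chi_+$ only at $1$, so part (b) applies to each piece. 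Your ``two applications of Lemma~\ref{lem53}(b)'' gluing is morally the same work relocated into (b), and with care it does prove the two-point version; but as written you have not proved the paper's (b), and you have skipped the decomposition that the paper uses to pass from (b) to (c).
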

\begin{proof}
For part (a), since $q\in QC$ is an odd function,  it follows  from \eqref{mlambda} that 
$$ 
  \delta_{\lambda,\pm 1}(q)=        (m_{\lambda}q)(\pm 1) = 0 
  \quad\mbox{ for all } \lambda\ge1.
$$  
Therefore, by \eqref{deltam} and \eqref{Mtau0}, $q$ vanishes on $\Lambda\times\{\pm 1\}\subseteq QC^*$ and hence on its closure, in particular, also on $M_{\pm 1}^0(QC)$. 

For part (b) assume that $q|_{M_{\pm 1}^0(QC)}=0$. We use the fact that  $QC = VMO\cap L^\infty$. 
It follows from the definition of $VMO$-functions that the product of a $VMO$-function with a uniformly continuous function
is again $VMO$. Therefore, $pq$ is $VMO$ on the interval $\T\setminus \{\pm 1\}$. By Lemma  \ref{lem53}(c), the integral gap $\gamma_{\pm 1}(pq)$ is zero. Hence $pq$ is $VMO$ on all of $\T$  by Lemma  \ref{lem53}(b). This implies $pq\in QC$. 

For case (c) decompose $q=qc_1+qc_{-1}$ such that $c_{\pm1}\in C(\T)$ vanishes identically in a neighborhood of $\pm1$.  
Then apply the result of (b).
\end{proof}

We will also need the following lemma.

\begin{lem}\label{lem57} 
$\delta_{\lambda, \tau}$ is not multiplicative over $\widetilde{QC}$ for each fixed $\lambda\in [1,\infty)$ and $\tau \in \mathbb{T}$.
\end{lem}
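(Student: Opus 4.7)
The plan is to exhibit a single $a \in \widetilde{QC}$ for which $\delta_{\lambda,\tau}(a^2) \neq (\delta_{\lambda,\tau}(a))^2$; this suffices, since any multiplicative linear functional $\varphi$ on a commutative algebra must satisfy $\varphi(a^2) = \varphi(a)^2$. Because $\widetilde{C}(\T) \subseteq \widetilde{QC}$, it is enough to find $a$ among the even continuous functions.

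The natural candidate is $a(t) := \mathrm{Re}(t)$, which is even since $a(1/t) = \mathrm{Re}(\bar{t}) = \mathrm{Re}(t)$. Writing $\tau = e^{i\theta}$ and applying the Cauchy--Schwarz inequality to the normalized Lebesgue probability measure on the open arc
$$I_{\lambda,\tau} = \{\,e^{ix} : \theta - \pi/\lambda < x < \theta + \pi/\lambda\,\},$$
one obtains $(m_\lambda a)(\tau)^2 \le (m_\lambda a^2)(\tau)$, with equality if and only if $a$ is essentially constant on $I_{\lambda,\tau}$. The remaining observation is that $\mathrm{Re}$ restricted to $I_{\lambda,\tau}$ is a real-analytic, non-constant function (the arc has positive length for every $\lambda \in [1,\infty)$), so the inequality is strict, which gives the desired non-multiplicativity.

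There is essentially no obstacle here; the only real point of care is choosing an \emph{even} function so that it actually lies in $\widetilde{QC}$ (for example, $\mathrm{Im}(t)$ is odd, lying in $QC$ but not in $\widetilde{QC}$). A direct computation provides an explicit alternative to invoking Cauchy--Schwarz, namely
$$m_\lambda(a)(\tau) = \frac{\lambda\sin(\pi/\lambda)}{\pi}\cos\theta,\qquad m_\lambda(a^2)(\tau) = \frac{1}{2} + \frac{\lambda\sin(2\pi/\lambda)}{4\pi}\cos(2\theta),$$
which one can check fails the multiplicative relation $m_\lambda(a^2)(\tau) = (m_\lambda(a)(\tau))^2$ for every $\lambda \in [1,\infty)$ and every $\theta \in \mathbb{R}$.
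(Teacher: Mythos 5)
Your proof is correct, and it takes a genuinely different route from the paper. The paper tests multiplicativity against the family $\phi_k(e^{ix}) = e^{ikx}+e^{-ikx}$, computes $\delta_{\lambda,\tau}(\phi_k^2) - \delta_{\lambda,\tau}(\phi_k)^2$ explicitly in terms of the Fourier transform of the averaging kernel, and then lets $k\to\infty$ (with $\lambda$ fixed) so that the $\tfrac{\sin x}{x}$-terms vanish and the defect exceeds $1$; the test function thus depends on $\lambda$. You instead fix the single function $a(t)=\mathrm{Re}(t)\in\widetilde C(\T)\subseteq\widetilde{QC}$ and observe that $\delta_{\lambda,\tau}$ is integration against a (nondegenerate) probability measure on an arc of positive length, so that for real-valued $a$ the Cauchy--Schwarz (equivalently Jensen/variance) inequality gives $\delta_{\lambda,\tau}(a)^2\le\delta_{\lambda,\tau}(a^2)$ with equality only when $a$ is a.e.\ constant on the arc; since $\cos$ is real-analytic and non-constant, the inequality is strict. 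Your approach is shorter, avoids Fourier computations entirely, and exposes the structural reason that $\delta_{\lambda,\tau}$ fails to be multiplicative (a genuine average of a non-constant real function never is); the paper's approach, by contrast, yields a quantitative lower bound ($>1$) on the defect, which your soft argument does not. One small care point you correctly flagged: $a$ must be even, so $\mathrm{Re}(t)$ works while $\mathrm{Im}(t)$ would not. Your backup direct computation of $m_\lambda(a)$ and $m_\lambda(a^2)$ is also correct.
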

\begin{proof}
Let $\tau=e^{i\theta}$ and consider $\phi(e^{ix})  = e^{ikx} + e^{-ikx}$ with $k\in \mathbb{N}$. Apparently, $\phi\in \widetilde{QC}$. Note that the moving average  is generated by the function
$$
K(x) = \frac{1}{2\pi}\chi_{(-\pi, \pi)}(x), \qquad \delta_{\lambda,\tau}(q)=(m_\lambda q)(e^{i\theta}) 
   =   \int_{-\iy}^\iy  \lambda K(\lambda x) q(e^{i(\theta-x)})\, dx.
$$
Hence, by formula 3.14(3.5) in \cite{BS}, or by direct computation,
 \begin{align*}
        \lefteqn{\delta_{\lambda,\tau}(\phi^2) - \delta_{\lambda,\tau}(\phi)\delta_{\lambda,\tau}(\phi)}
        \hspace*{4ex}\\
        &=  
        (\hat{K}(2k/\lambda)e^{2ki\theta}+ \hat{K}(-2k/\lambda)e^{-2ki\theta} +2) 
        - (\hat{K}(k/\lambda)e^{ki\theta}+ \hat{K}(-k/\lambda)e^{-ki\theta})^2
        \\
        &= 
        2\cos(2k\theta)\left(\frac{\sin(2k\pi/\lambda)}{2k\pi/\lambda} - \left(\frac{\sin(k\pi/\lambda)}{k\pi/\lambda}\right)^2\right) 
        + 2 - 2\left(\frac{\sin(k\pi/\lambda)}{k\pi/\lambda}\right)^2 ,
\end{align*}
where $\hat{K}$ is the Fourier transform of the above $K$. Note that $\frac{\sin x}{x}\to 0$ as $x\to\infty$. Hence, for each fixed $\lambda$, one can choose a sufficiently large $k\in \mathbb{N}$, such that with the corresponding $\phi$,
$$
\delta_{\lambda,\tau}(\phi^2) - \delta_{\lambda,\tau}(\phi)\delta_{\lambda,\tau}(\phi) >1.
$$
Therefore $\delta_{\lambda,\tau}$ is not multiplicative for each $\lambda$ and $\tau$.
\end{proof}

%%%%%%%%%%%%%%%%%%%%%%%%%%%%%%
%%%%%%%%%%%%%%%%%%%%%%%%%%%%%%

\section{Fibers of $\boldsymbol{M(QC)}$ over $\boldsymbol{M(\tQC)}$}
\label{sec:4}

Now we are going to describe the fibers $M^\eta(QC)$. 
To prepare for it, we make the following definition.
Given $\xi\in M(QC)$, we define its ``conjugate'' $\xi'\in M(QC)$ by
\begin{equation}
  \xi'(q):= \xi(\tilde{q}),\qquad q\in QC.
\end{equation}
Recalling also definition \eqref{xi-hat}, it is clear that $\hat{\xi}=\hat{\xi'}\in M(\widetilde{QC})$. Furthermore, the following statements are obvious:
\begin{enumerate}
\item[(i)]
If $\xi\in M_\tau(QC)$, then $\xi'\in M_{\bar{\tau}}(QC)$.
\item[(ii)]
If $\xi\in M_\tau^\pm(QC)$, then $\xi'\in M_{\bar{\tau}}^\mp(QC)$.
\item[(iii)]
If $\xi\in M_\tau^0(QC)$, then $\xi'\in M_{\bar{\tau}}^0(QC)$.
\end{enumerate}
For the characterization of the fibers $M^\eta(QC)$ we have to distingish whether $\eta\in M_\tau(\tQC)$ with $\tau\in\{+1,-1\}$ or with
$\tau\in \T_+$. In this connection recall the last formula in  \eqref{unions}.

%%%%%%%%%%%%%%%%%%%%%%

\subsection{Fibers over $\boldsymbol{M_{\tau}(\tQC)}$, $\boldsymbol{\tau\in\{+1,-1\}}$}

For the description of $M^\eta(QC)$ with $\eta\in M_{\pm 1}(\widetilde{QC})$ the following results is crucial.

\begin{prop}\label{p3.1}
If $\xi_1,\xi_2\in M_{\pm 1}^+(QC)$ and $\hat{\xi_1}=\hat{\xi_2}$, then $\xi_1=\xi_2$.
\end{prop}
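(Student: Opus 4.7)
The plan is to reduce the identity $\xi_1=\xi_2$ on $QC$ to agreement on the even subalgebra $\widetilde{QC}$ by splitting an arbitrary $q\in QC$ into its even and odd parts. Write $q=q_e+q_o$ where $q_e(t)=\tfrac12(q(t)+q(1/t))$ and $q_o(t)=\tfrac12(q(t)-q(1/t))$; both lie in $QC$ since $QC$ is invariant under $q\mapsto\tilde q$. Since $q_e\in\widetilde{QC}$, hypothesis $\hat\xi_1=\hat\xi_2$ immediately yields $\xi_1(q_e)=\xi_2(q_e)$, and the problem reduces to showing $\xi_1(q_o)=\xi_2(q_o)$ for odd $q_o\in QC$.

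For the odd part I would use the tools from Section 2. By Lemma \ref{cor54}(a), an odd $q_o\in QC$ satisfies $q_o|_{M^0_{\pm1}(QC)}=0$, so Lemma \ref{cor54}(c) gives $q_o\chi_+,q_o\chi_-\in QC$. Set $f:=q_o\chi_+$; a direct calculation using oddness of $q_o$ and $\chi_+(1/t)=\chi_-(t)$ yields $\tilde f=-q_o\chi_-$. Therefore
\begin{equation*}
f-\tilde f=q_o(\chi_++\chi_-)=q_o,\qquad f+\tilde f=q_o(\chi_+-\chi_-)\in\widetilde{QC}.
\end{equation*}
The second identity is the crux: $f+\tilde f$ is an element of $\widetilde{QC}$ for which $\xi_1$ and $\xi_2$ automatically agree.

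It remains to show that $\xi_i(q_o)$ can be recovered from $\xi_i(f+\tilde f)$. Here I use the hypothesis $\xi_i\in M^+_{\pm 1}(QC)$. Suppose first $\xi_i\in M^+_{+1}(QC)$. Then $\chi_-$ vanishes in a one-sided neighborhood of $+1$ approached from the $+$ side, so $\limsup_{t\to+1+0}|q_o\chi_-(t)|=0$; since $q_o\chi_-\in QC$, the defining property of $M^+_{+1}(QC)$ gives $\xi_i(\tilde f)=-\xi_i(q_o\chi_-)=0$. Consequently $\xi_i(q_o)=\xi_i(f-\tilde f)=\xi_i(f)=\xi_i(f+\tilde f)$, and agreement of $\xi_1$ and $\xi_2$ on $f+\tilde f\in\widetilde{QC}$ completes the argument. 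The case $\xi_i\in M^+_{-1}(QC)$ is symmetric: there $\chi_+$ vanishes near $-1$ from the $+$ side, so $\xi_i(f)=0$ and $\xi_i(q_o)=-\xi_i(\tilde f)=-\xi_i(f+\tilde f)$, and the same conclusion follows.

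I do not anticipate a major obstacle; the only point requiring care is verifying $q_o\chi_\pm\in QC$, which is precisely what Lemma \ref{cor54} was set up to deliver, and the sign/side bookkeeping in identifying which of $f,\tilde f$ is annihilated by the $M^+_{\pm1}(QC)$ condition. The whole proof is essentially a two-line computation once these lemmas are invoked.
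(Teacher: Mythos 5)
Your proof is correct and follows essentially the same route as the paper: split $q$ into even and odd parts, observe that $q_o(\chi_+-\chi_-)\in\widetilde{QC}$ (your $f+\tilde f$ is the paper's $q_o-2q_o\chi_-$), and use the $M^+_{\pm1}(QC)$ condition to show $\xi_i$ annihilates $q_o\chi_-$ (resp.\ $q_o\chi_+$) so that $\xi_i(q_o)$ agrees up to sign with $\xi_i$ evaluated on that even element. The only cosmetic difference is that you explicitly spell out the $\tau=-1$ case while the paper writes only the $\tau=+1$ computation.
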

\begin{proof}
Each $q\in QC$ admits a unique decomposition
$$
q = \frac{q+\tilde{q}}{2} + \frac{q - \tilde{q}}{2} =: q_e + q_o,
$$
where $q_e$ is even and $q_o$ is odd.
By Lemma \ref{cor54}(ac), we have $q_o\chi_-\in QC$, and
\begin{align*}
        \xi_1(q) 
        &= \xi_1(q_e)+ \xi_1(q_o) = \xi_1(q_e) + \xi_1(q_o - 2q_o\chi_-)\\
        &= \eta(q_e) + \eta(q_o-2q_o\chi_-) \\
        &=  \xi_2(q_e) + \xi_2(q_o - 2q_o\chi_-)= \xi_2(q_e) + \xi_2(q_o) = \xi_2(q).
\end{align*}
Note that $q_o-2q_o\chi_-=q_o(\chi_+-\chi_-)\in \widetilde{QC}$ and that $\lim\limits_{t\to 1+0} q_o(t)\chi_-(t)=0$, 
whence $\xi_i(q_o\chi_-)=0$. It follows that $\xi_1=\xi_2$. 
\end{proof}

\begin{thm}\label{thm32}
Let $\eta\in M_{\pm 1}(\widetilde{QC})$. Then either 
\begin{enumerate}
\item[(a)] 
$M^\eta(QC) = \{\xi\}$ with $\xi=\xi' \in M_{\pm 1}^0(QC)$, or
\item[(b)] 
$M^\eta(QC) = \{\xi, \xi'\}$ with $\xi\in M_{\pm 1}^+(QC)\setminus M_{\pm1}^-(QC)$ and $\xi'\in M_{\pm 1}^-(QC)\setminus M_{\pm 1}^+(QC)$.
\end{enumerate}
\end{thm}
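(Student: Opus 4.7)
The key tools are Proposition \ref{p3.1}, the conjugation $\xi \mapsto \xi'$ with its properties (i)--(iii), and the decomposition $M_{\pm 1}(QC) = M_{\pm 1}^+(QC)\cup M_{\pm 1}^-(QC)$ from Proposition \ref{prop52}. First I would note that $\eta \in M_{\pm 1}(\tQC)$ forces every $\xi \in M^\eta(QC)$ to lie in $M_{\pm 1}(QC)$ (the fiber of $\tQC$ over $\pm 1$ pulls back to the fiber of $QC$ over $\pm 1$ since $\overline{\pm 1} = \pm 1$), and that $\xi'$ is automatically in $M^\eta(QC)$ as well, because $\hat{\xi'} = \hat\xi = \eta$. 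Hence the fiber is contained in $M_{\pm 1}(QC)$ and is closed under the involution $\xi \mapsto \xi'$.

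The argument then splits into two mutually exclusive cases, depending on whether some (equivalently, every) $\xi \in M^\eta(QC)$ lies in $M_{\pm 1}^0(QC)$. In case (a), pick $\xi \in M^\eta(QC)\cap M_{\pm 1}^0(QC)$; property (iii) gives $\xi' \in M_{\pm 1}^0(QC) \subseteq M_{\pm 1}^+(QC)$, and since $\hat\xi = \hat{\xi'}$, Proposition \ref{p3.1} forces $\xi' = \xi$. For uniqueness, any other $\xi_2 \in M^\eta(QC)$ lies in $M_{\pm 1}^+(QC)$ or $M_{\pm 1}^-(QC)$: in the first subcase Proposition \ref{p3.1} yields $\xi_2 = \xi$ directly; in the second, Proposition \ref{p3.1} applied to $\xi_2'$ and $\xi$ (both in $M_{\pm 1}^+(QC)$ with the same image $\eta$) gives $\xi_2' = \xi$, hence $\xi_2 = \xi' = \xi$.

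In case (b), no point of $M^\eta(QC)$ lies in $M_{\pm 1}^0(QC)$, so after possibly relabeling via $\xi \leftrightarrow \xi'$ I may assume $\xi \in M_{\pm 1}^+(QC)\setminus M_{\pm 1}^-(QC)$. Then (ii) places $\xi'$ in $M_{\pm 1}^-(QC)$, and if $\xi'$ were also in $M_{\pm 1}^+(QC)$ then $\xi' \in M_{\pm 1}^0(QC)$, which via (iii) would return $\xi \in M_{\pm 1}^0(QC)$, contradicting the case assumption. The same uniqueness argument as in (a) transfers verbatim, yielding $M^\eta(QC) = \{\xi,\xi'\}$.

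The main substantive obstacle is already absorbed by Proposition \ref{p3.1}, whose proof rests on the nontrivial fact (via Lemmas \ref{lem53} and \ref{cor54}) that $q_o\chi_- \in QC$ whenever $q_o \in QC$ is odd. Once Proposition \ref{p3.1} is in hand, Theorem \ref{thm32} reduces to a bookkeeping exercise tracking how the involution $\xi \mapsto \xi'$ interacts with the sets $M_{\pm 1}^\pm(QC)$ and $M_{\pm 1}^0(QC)$.
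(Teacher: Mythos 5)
Your proof is correct and fills in exactly the details that the paper's terse proof leaves implicit: the paper simply cites \eqref{eq1.7}, Proposition~\ref{prop52}, Proposition~\ref{p3.1}, and the involution properties (i)--(iii), which is precisely the bookkeeping you carry out. The case split and the repeated application of Proposition~\ref{p3.1} (directly, or to $\xi_2'$ after flipping $+\leftrightarrow-$ via (ii)) is the intended argument.
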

\begin{proof}
{}From the statement  \eqref{eq1.7} it follows that   $\hat{\xi}\in M_{\pm 1}(\tQC)$ implies $\xi\in M_{\pm1}(QC)$.
Therefore $\emptyset \neq M^\eta(QC)\subseteq M_{\pm 1}(QC)$ whenever
$\eta\in M_{\pm1}(\tQC)$. Now the assertion follows from Proposition \ref{prop52}, Proposition \ref{p3.1}, and  the statements (i)-(iii) above.
\end{proof}

Next we want to characterize of  those $\eta \in M_{\pm 1}(\tQC)$ which give rise to the first case.
Consider the functionals $\delta_{\lambda, \tau}\in \widetilde{QC}^*$ associated with the moving average  \eqref{deltam}, and define,
in analogy to \eqref{Mtau0},
\begin{equation}\label{eqn-6.10}
        M_\tau^0(\widetilde{QC}) :=   M(\widetilde{QC}) \cap \clos_{\,\widetilde{QC}^*}(\Lambda \times \{\tau\}).
\end{equation}
We will use this definition for $\tau \in \overline{\T_+}=\T_+\cup\{+1,-1\}$.

\begin{thm}\label{t3.3}
The map $\Psi:\xi\mapsto \hat{\xi}$ is a bijection from  $M_{\pm 1}^0(QC)$ onto $M_{\pm 1}^0(\tQC)$.
\end{thm}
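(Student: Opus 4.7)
The plan is to verify (i) injectivity of $\Psi$ restricted to $M_{\pm 1}^0(QC)$, (ii) that $\Psi$ sends $M_{\pm 1}^0(QC)$ into $M_{\pm 1}^0(\tQC)$, and (iii) surjectivity onto $M_{\pm 1}^0(\tQC)$. Injectivity is essentially free: by Proposition \ref{prop52} we have $M_{\pm 1}^0(QC)\subseteq M_{\pm 1}^+(QC)\cap M_{\pm 1}^-(QC)$, so two elements of $M_{\pm 1}^0(QC)$ with equal image under $\Psi$ must agree by Proposition \ref{p3.1}.

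For (ii), if $\xi\in M_{\pm 1}^0(QC)$, then there is a net $\delta_{\lambda_\alpha,\pm 1}$ in $\Lambda\times\{\pm 1\}\subseteq QC^*$ converging weak-$^*$ to $\xi$. Since the restriction map $QC^*\to \tQC^*$ is weak-$^*$ continuous, the same net, viewed now as functionals on $\tQC$, converges to $\hat{\xi}$. Because $\hat{\xi}\in M(\tQC)$ automatically (restriction of a multiplicative functional is multiplicative), the definition \eqref{eqn-6.10} yields $\hat{\xi}\in M_{\pm 1}^0(\tQC)$.

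The substantive step is (iii). Given $\eta\in M_{\pm 1}^0(\tQC)$, pick a net $\delta_{\lambda_\alpha,\pm 1}$ in $\Lambda\times\{\pm 1\}$ converging weak-$^*$ to $\eta$ in $\tQC^*$. Each functional $\delta_{\lambda_\alpha,\pm 1}$, regarded as an element of $QC^*$, has norm at most $1$, so by Banach--Alaoglu we can pass to a subnet converging weak-$^*$ in $QC^*$ to some $\xi\in QC^*$. By construction $\xi$ lies in $\clos_{QC^*}(\Lambda\times\{\pm 1\})$, and Proposition \ref{p2.1} tells us that this closure is contained in $M(QC)\cup(\Lambda\times\T)$; moreover, restricting back to $\tQC$ preserves the limit, so $\hat{\xi}=\eta$. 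The key point now is to rule out $\xi\in \Lambda\times\{\pm 1\}$: if $\xi=\delta_{\lambda,\pm 1}$ for some $\lambda\in\Lambda$, then $\hat{\xi}=\delta_{\lambda,\pm 1}|_{\tQC}$ fails to be multiplicative on $\tQC$ by Lemma \ref{lem57}, contradicting $\hat{\xi}=\eta\in M(\tQC)$. Hence $\xi\in M_{\pm 1}^0(QC)$ with $\Psi(\xi)=\eta$.

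The main obstacle I anticipate is precisely this last exclusion, and Lemma \ref{lem57} has evidently been prepared in Section 2 for exactly this purpose; once it is invoked, the argument collapses to a standard compactness-plus-continuity argument between the two dual spaces.
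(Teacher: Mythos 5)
Your proof is correct but takes a genuinely different route from the paper's in the surjectivity step. The paper constructs the preimage explicitly via $\xi(q):=\eta\bigl(\tfrac{q+\tilde q}{2}\bigr)$, observes that the given net already converges to this $\xi$ in $QC^*$ because $\delta_\lambda$ annihilates odd functions, and then verifies multiplicativity by hand: decomposing $p,q$ into even and odd parts reduces the issue to showing $\eta(p_oq_o)=0$, which requires the integral-gap machinery of Lemmas \ref{lem53} and \ref{cor54} together with Lemma \ref{lem57} (there used only to conclude $\lambda_\omega\to\infty$). You instead pass to a weak-$^*$ convergent subnet in $QC^*$ by Banach--Alaoglu, invoke Proposition \ref{p2.1} to place the limit $\xi$ in $M(QC)\cup(\Lambda\times\T)$, and then use Lemma \ref{lem57} to rule out the second alternative, thereby getting multiplicativity for free. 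Your argument is softer and shorter, bypassing the integral-gap lemmas entirely; the paper's pays that price in exchange for an explicit formula for the preimage, a template it reuses in Proposition \ref{prop510} for $\tau\in\T_+$. One small imprecision in your write-up: Proposition \ref{p2.1} only gives $\xi\in M(QC)\cup(\Lambda\times\T)$, so the case to exclude is $\xi\in\Lambda\times\T$, not merely $\xi\in\Lambda\times\{\pm1\}$; this is harmless, since Lemma \ref{lem57} applies to every $(\lambda,\tau)\in\Lambda\times\T$, but the wider exclusion should be stated.
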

\begin{proof}
Without loss of generality consider the case $\tau=1$.
First of all, $\Psi$ maps $M_1^0(QC)$ into $M_1^0(\widetilde{QC})$.
Indeed, it follows from \eqref{Mtau0} that for any $\xi\in M_1^0(QC)$, any 
$q_1,\dots,q_k\in\tQC\subset QC$ and $\varepsilon>0$, there exists $\lambda\in\Lambda$
such that $|\xi(q_i)-\delta_{\lambda,1}(q_i)|<\eps$ for all $i$. 
But this is just $|\hat{\xi}(q_i)-\delta_{\lambda,1}(q_i)|<\eps$.
Therefore, $\hat{\xi}$ lies in the weak-$^*$ closure of $\{\delta_{\lambda,1}|_{QC^*}\}_{\lambda\in\Lambda}$.
Hence, by \eqref{eqn-6.10}, $\hat{\xi}\in M_{1}^0(\tQC)$.
The injectiveness of the map $\Psi|_{M_1^0(QC)}$ follows from Theorem \ref{thm32} or Proposition \ref{p3.1}.

It remains to show that $\Psi|_{M_1^0(QC)}$ is surjective. Choose any $\eta\in M_1^0(\widetilde{QC})$. 
By definition, there exists a net 
$\{\lambda_\om\}_{\om\in\Omega}$, $\lambda_\om\in \Lambda$, such that the net 
$\{\delta_{\lambda_\om}\}_{\om\in\Omega}:=\{\delta_{\lambda_\om,1}\}_{\om\in\Omega}$ 
converges to $\eta$ (in the weak-$^*$ sense of  functionals on $\tQC$). 
Note that $\delta_\lambda(q) = 0$ for any $\lambda\in \Lambda$ whenever $q\in QC$ is an odd function. 
Therefore the net $\{\delta_{\lambda_\om}\}_{\om\in \Omega}$ (regarded as functionals on $QC$) 
converges to the functional $\xi\in QC^*$ defined by
$$
  \xi(q) := \eta(\frac{q+\tilde{q}}{2}), \quad  q\in QC.
$$
Indeed, $\delta_{\lambda_\om}(q)=\frac{1}{2}\delta_{\lambda_\om}(q+\tilde{q})\to \frac{1}{2}\eta(q+\tilde{q})=\xi(q)$.
It follows that $\xi\in \clos_{\, QC^*}(\Lambda\times\{1\})$.

Next we show that $\xi$ is multiplicative over $QC$, i.e., $\xi\in M(QC)$.
Given arbitrary $p, q\in QC$ we can decompose them into even and odd parts as $p = p_e+p_o$, $q = q_e+ q_o$.
The even part of $pq$ equals $p_eq_e+p_oq_o$. Therefore using the definition of $\xi$ in terms of $\eta$
we get 
$$
\xi(p)\xi(q)=\eta(p_e)\eta(q_e)=\eta(p_eq_e),\quad \xi(pq)=\eta(p_eq_e+p_oq_o).
$$
Hence the multiplicativity of $\xi$ follows if we can show that  $\eta(p_oq_o)=0$. To see this we argue as follows.
By Lemma \ref{cor54}(ac),
we have $p_oq_o|_{M_{\pm1}^0(QC)} = 0$ and $p_oq_o\chi_+\in QC$, and hence by  Lemma \ref{lem53}
the integral gap
$$
  \gamma_1(p_oq_o\chi_+) = \limsup_{\delta\to +0} \left|\frac{1}{\delta}\int_0^\delta(p_oq_o)(e^{ix})\,dx\right| = 0.
$$
In other word, as $\lambda\to+\infty$,
$$
\delta_\lambda(p_oq_o) = \frac{\lambda}{2\pi}\int_{-\pi/\lambda}^{\pi/\lambda} (p_oq_o)(e^{ix})\,dx 
= \frac{\lambda}{\pi}\int_0^{\pi/\lambda}(p_oq_o)(e^{ix})\,dx  \to 0.
$$
Since the net  $\{\delta_{\lambda_\om}\}_{\om\in\Omega}$ (regarded as functionals on $\widetilde{QC}$) converges to $\eta\in M(\widetilde{QC})$,
it follows from Lemma \ref{lem57} that $\lambda_\om\to +\infty$. Therefore,
$$
\delta_{\lambda_\om}(p_oq_o)\to 0 \quad\mbox{and}\quad \delta_{\lambda_\om}(p_oq_o)\to\eta(p_oq_o).
$$
We obtain $\eta(p_oq_o)=0$ and conclude that $\xi$ is multiplicative. Combined with the above this yields $\xi\in M_1^0(QC)$, 
while clearly $\eta=\hat{\xi}$. Hence $\Psi:M^0_{1}(QC)\to M^0_1(\tQC)$ is surjective.
\end{proof}

The previous two theorems imply the following.

\begin{cor}\label{c3.5}
$M_{\pm 1}^0(\tQC)$ is a closed subset of $M_{\pm 1}(\tQC)$. Moreover, 
\begin{enumerate}
\item[(a)] 
if $\eta\in M_{\pm 1}^0(\tQC)$, then $M^\eta(QC) = \{\xi\}$ with  $\xi=\xi' \in M_{\pm 1}^0(QC)$;
\item[(b)] 
if $\eta\in M_{\pm 1 }(\tQC)\setminus M_{\pm 1}^0(\tQC)$, then 
$M^\eta(QC) = \{\xi, \xi'\}$ with $\xi\in M_{\pm 1}^+(QC)\setminus M_{\pm 1}^-(QC)$ and $\xi'\in M_{\pm 1}^-(QC)\setminus M_{\pm 1}^+(QC)$.
\end{enumerate}
\end{cor}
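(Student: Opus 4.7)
The plan is to obtain the corollary essentially as a bookkeeping consequence of Theorem~\ref{thm32} and Theorem~\ref{t3.3}, together with the definition \eqref{eqn-6.10}. Since Proposition~\ref{prop52} (applied at $\tau=\pm 1$) gives $M_{\pm1}^0(QC)=M_{\pm1}^+(QC)\cap M_{\pm1}^-(QC)$, the two alternatives in Theorem~\ref{thm32} are mutually exclusive: in case (a) the unique point of $M^\eta(QC)$ lies in $M_{\pm1}^0(QC)$, while in case (b) neither of the two points does. This dichotomy is what drives the entire argument.

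First I would verify the inclusion $M_{\pm1}^0(\tQC)\subseteq M_{\pm1}(\tQC)$. This is immediate from the surjectivity half of Theorem~\ref{t3.3}: any $\eta\in M_{\pm1}^0(\tQC)$ is of the form $\hat{\xi}$ for some $\xi\in M_{\pm1}^0(QC)\subseteq M_{\pm1}(QC)$, and then for $f\in\tC(\T)\subseteq C(\T)$ we get $\eta(f)=\xi(f)=f(\pm1)$. (Alternatively, one can argue directly from Lemma~\ref{lem57} that any defining net $\lambda_\om$ satisfies $\lambda_\om\to\infty$, so the averages of any continuous even function converge to its value at $\pm 1$.) Closedness of $M_{\pm1}^0(\tQC)$ in $M_{\pm1}(\tQC)$ is then built in by the definition \eqref{eqn-6.10}: a weak-$^*$ closure is weak-$^*$ closed in $\tQC^*$, and the relative topology on $M(\tQC)$ is precisely the weak-$^*$ topology, so $M_{\pm1}^0(\tQC)=M(\tQC)\cap\clos_{\,\tQC^*}(\Lambda\times\{\pm1\})$ is closed in $M(\tQC)$, hence in the (also closed) subset $M_{\pm1}(\tQC)$.

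For part (a), given $\eta\in M_{\pm1}^0(\tQC)$, Theorem~\ref{t3.3} furnishes a (unique) $\xi\in M_{\pm1}^0(QC)$ with $\hat{\xi}=\eta$, so $\xi\in M^\eta(QC)$. Since $\xi\in M_{\pm1}^+(QC)\cap M_{\pm1}^-(QC)$, the option (b) of Theorem~\ref{thm32} is impossible, and option (a) forces $M^\eta(QC)=\{\xi\}$ with $\xi=\xi'$. For part (b), given $\eta\in M_{\pm1}(\tQC)\setminus M_{\pm1}^0(\tQC)$, the bijectivity of $\Psi$ in Theorem~\ref{t3.3} says $\Psi^{-1}(\eta)$ contains no element of $M_{\pm1}^0(QC)$; this rules out option (a) of Theorem~\ref{thm32}, so option (b) is forced, yielding the desired description of $M^\eta(QC)=\{\xi,\xi'\}$.

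There is no real obstacle here beyond keeping the two cases of Theorem~\ref{thm32} cleanly separated: the whole argument is the observation that $\Psi$ restricted to $M_{\pm1}^0(QC)$ is a bijection onto $M_{\pm1}^0(\tQC)$, so membership in $M_{\pm1}^0(\tQC)$ is exactly the condition that singles out case (a) over case (b) in Theorem~\ref{thm32}.
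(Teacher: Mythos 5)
Your proof is correct and fills in precisely the bookkeeping that the paper compresses into the phrase ``the previous two theorems imply the following'': the dichotomy in Theorem~\ref{thm32} is made exclusive by $M_{\pm1}^0(QC)=M_{\pm1}^+(QC)\cap M_{\pm1}^-(QC)$, and the bijection $\Psi:M_{\pm1}^0(QC)\to M_{\pm1}^0(\tQC)$ from Theorem~\ref{t3.3} is exactly what distinguishes case (a) from case (b), while the closedness and the containment $M_{\pm1}^0(\tQC)\subseteq M_{\pm1}(\tQC)$ follow as you indicate from \eqref{eqn-6.10} and Theorem~\ref{t3.3}. This is the same route the authors intend.
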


Note also that  $M_{\pm1}(\widetilde{QC})$ decomposes into the disjoint union of 
\begin{equation}\label{f.35}
 M_{\pm1}(\widetilde{QC}) \setminus M_{\pm1}^0(\widetilde{QC}) \quad\mbox{ and }\quad
 M_{\pm1}^0(\widetilde{QC}),
\end{equation}
and that $\Psi$ is a  {\em two-to-one} map from $M_{\pm 1}(QC)\setminus M_{\pm 1}^0(QC)$ onto $M_{\pm 1}(\tQC)\setminus M_{\pm 1}^0(\tQC)$.

%%%%%%%%%%%%%%%%%%%%%%

\subsection{Fibers over $\boldsymbol{M_\tau(\tQC)}$, $\boldsymbol{\tau\in \T_+}$}

Now we consider the fibers of $M^\eta(QC)$ over $\eta\in M_\tau(\tQC)$ with $\tau\in\T_+$.
This case is easier than the previous one.

\begin{prop}\label{prop59} If $\hat{\xi}_1 = \hat{\xi}_2$ for $\xi_1, \xi_2\in M_\tau(QC)$ with $\tau\in\T_+$, then $\xi_1 = \xi_2$.
\end{prop}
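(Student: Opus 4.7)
The plan is to reduce the problem to showing that $\xi_1$ and $\xi_2$ agree on the ``odd part'' of any $q \in QC$, and then exploit the fact that $\tau \in \T_+$ forces $\tau \neq \bar\tau$, which (unlike the case $\tau = \pm 1$) allows us to separate the two points by an ordinary continuous function. Concretely, given $q \in QC$, I would split $q = q_e + q_o$ with $q_e := (q+\tilde q)/2 \in \tQC$ and $q_o := (q-\tilde q)/2 \in QC$ (note that $\tilde q \in QC$ whenever $q \in QC$). Since $q_e \in \tQC$, the hypothesis $\hat\xi_1 = \hat\xi_2$ immediately gives $\xi_1(q_e) = \xi_2(q_e)$, so the entire task reduces to proving $\xi_1(q_o) = \xi_2(q_o)$.

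For this, I would choose any $c \in C(\T)$ with $c(\tau) = 1$ and $c(\bar\tau) = 0$, which exists because $\tau \neq \bar\tau$. The key move is to consider
\[
    f := (c - \tilde c)\, q_o \;\in\; QC.
\]
A direct computation gives $\tilde f = (\tilde c - c)(-q_o) = f$, so $f$ is even and hence $f \in \tQC$. Consequently
\[
   \xi_1(f) \;=\; \hat\xi_1(f) \;=\; \hat\xi_2(f) \;=\; \xi_2(f).
\]
On the other hand, $c - \tilde c \in C(\T)$ with $(c-\tilde c)(\tau) = 1$, so multiplicativity of $\xi_i$ on $QC$, together with $\xi_i(g) = g(\tau)$ for $g \in C(\T)$, yields
\[
   \xi_i(f) \;=\; \xi_i(c-\tilde c)\,\xi_i(q_o) \;=\; (c-\tilde c)(\tau)\,\xi_i(q_o) \;=\; \xi_i(q_o).
\]
Combining these gives $\xi_1(q_o) = \xi_2(q_o)$, completing the proof.

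I do not expect a substantive obstacle here: once one notices that for $\tau \in \T_+$ one can separate $\tau$ from $\bar\tau$ by a continuous function, the odd part can be ``symmetrized'' into an element of $\tQC$ without recourse to characteristic functions or integral-gap arguments. The contrast with Proposition~\ref{p3.1} is illuminating, since for $\tau = \pm 1$ no such continuous separator is available (the fiber points $\tau$ and $\bar\tau$ coincide), which is precisely why that argument required the more delicate use of $\chi_\pm$ together with Lemma~\ref{cor54}.
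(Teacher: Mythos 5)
Your proof is correct, and it achieves the result by the same underlying idea as the paper (a continuous function separates $\tau$ from $\bar\tau$), but the execution is genuinely different. The paper argues by contradiction: it picks $q$ with $\xi_1(q)\neq 0=\xi_2(q)$, takes a smooth $c_\tau$ equal to $1$ near $\tau$ and vanishing on the lower semicircle, forms $\overline q = qc_\tau+\widetilde{qc_\tau}\in\tQC$, observes that $\overline q - q$ vanishes in a whole neighborhood of $\tau$ so $\xi_i(\overline q)=\xi_i(q)$, and derives $0\neq\xi_1(q)=\xi_1(\overline q)=\xi_2(\overline q)=\xi_2(q)=0$. You instead split $q=q_e+q_o$, dispose of $q_e\in\tQC$ immediately, and symmetrize the odd part as $f=(c-\tilde c)q_o\in\tQC$, then use multiplicativity and $\xi_i(c-\tilde c)=(c-\tilde c)(\tau)=1$ to conclude $\xi_i(q_o)=\xi_i(f)$. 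Your route has two small advantages: it is direct rather than by contradiction, and it needs only the pointwise conditions $c(\tau)=1$, $c(\bar\tau)=0$, whereas the paper's construction requires $c_\tau$ to equal $1$ on a neighborhood of $\tau$ and vanish on a neighborhood of $\bar\tau$ so that $\overline q - q$ is locally zero. It also makes the structural parallel with Proposition~\ref{p3.1} explicit, as you note: in both cases one handles the odd part by multiplying it into $\tQC$, using $\chi_+-\chi_-$ when $\tau=\pm1$ (where a continuous separator is unavailable and the $VMO$/integral-gap machinery of Lemma~\ref{cor54} is needed) and using the continuous odd function $c-\tilde c$ when $\tau\in\T_+$.
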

\begin{proof}
Otherwise, there exists a $q\in QC$, such that $\xi_1(q)\neq 0$, $\xi_2(q) = 0$. Since $\tau \in  \T_+$, one can choose a smooth function $c_\tau$ such that $c_\tau = 1$ in a neighborhood of $\tau$ and such that  it vanishes on the lower semi-circle. Now, construct 
$\overline{q} = qc_\tau + \widetilde{qc_\tau}\in\tQC$. Note that  $\overline{q} - q$ is continuous at $\tau$ and vanishes there, hence 
$\xi_1(\overline{q} - q) = \xi_2(\overline{q} - q) = 0$. 
But then, since $\overline{q}\in\tQC$ and $\hat{\xi}_1 = \hat{\xi}_2$, we have
$$
0\neq \xi_1(q) = \xi_1(\overline{q}) = \xi_2(\overline{q}) = \xi_2(q) = 0,
$$
which is a contradiction.
\end{proof}

It has been stated in \eqref{eq1.7} that $\Psi$ maps $M_\tau(QC)\cup M_{\bar{\tau}}(QC)$ onto $M_\tau(\tQC)$.
Taking the statements (i)-(iii) into account, the previous proposition implies the following.

\begin{cor}\label{c3.7}
Let $\tau\in \T_+$ and $\eta\in M_\tau(\tQC)$. Then
$M^\eta(QC)=\{\xi,\xi'\}$
with some (unique) $\xi\in M_\tau(QC)$. 
\end{cor}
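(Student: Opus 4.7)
The plan is to use the surjectivity statement from \eqref{eq1.7}, the involution properties (i)--(iii), and Proposition \ref{prop59} (which is essentially the only technical input). Fix $\tau\in\T_+$ and $\eta\in M_\tau(\tQC)$.

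First I would produce a candidate $\xi$. By the surjectivity of $\Psi$ from $M_\tau(QC)\cup M_{\bar{\tau}}(QC)$ onto $M_\tau(\tQC)$ (statement \eqref{eq1.7}), there exists some $\zeta$ in this union with $\hat{\zeta}=\eta$. If $\zeta\in M_\tau(QC)$, set $\xi:=\zeta$; otherwise $\zeta\in M_{\bar{\tau}}(QC)$ and, using property (i), set $\xi:=\zeta'\in M_\tau(QC)$. In either case, $\hat{\xi}=\eta$ (since $\hat{\zeta}=\hat{\zeta'}$). Then $\xi'\in M_{\bar{\tau}}(QC)$ by (i), and because $\tau\ne\bar{\tau}$ the fibers $M_\tau(QC)$ and $M_{\bar{\tau}}(QC)$ are disjoint (the second union in \eqref{unions}), so $\xi\ne\xi'$. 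Since $\hat{\xi'}=\hat{\xi}=\eta$, I get the inclusion $\{\xi,\xi'\}\subseteq M^\eta(QC)$.

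Next I would prove the reverse inclusion. Let $\omega\in M^\eta(QC)$ be arbitrary. From \eqref{eq1.7} applied to $\omega$ we have $\omega\in M_\tau(QC)\cup M_{\bar{\tau}}(QC)$. In the first case, $\omega$ and $\xi$ both lie in $M_\tau(QC)$ and satisfy $\hat{\omega}=\hat{\xi}=\eta$, so Proposition \ref{prop59} gives $\omega=\xi$. In the second case, $\omega'\in M_\tau(QC)$ by (i), and $\hat{\omega'}=\hat{\omega}=\eta=\hat{\xi}$, so Proposition \ref{prop59} again yields $\omega'=\xi$; applying the involution once more gives $\omega=(\omega')'=\xi'$. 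Hence $M^\eta(QC)=\{\xi,\xi'\}$.

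Finally, the uniqueness assertion: if $\xi_1,\xi_2\in M_\tau(QC)$ both satisfy $\hat{\xi_i}=\eta$, then $\xi_1=\xi_2$ directly by Proposition \ref{prop59}, so the element $\xi\in M_\tau(QC)$ produced above is unique. I do not anticipate any real obstacle here; the corollary is essentially a bookkeeping consequence of Proposition \ref{prop59}, the surjectivity in \eqref{eq1.7}, and the behaviour of the involution $\xi\mapsto\xi'$ recorded in (i), with the disjointness of the $M_\tau(QC)$'s ensuring that $\xi\ne\xi'$.
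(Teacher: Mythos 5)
Your proof is correct and follows exactly the same route as the paper, which compresses the argument into a single sentence invoking statement \eqref{eq1.7}, the involution properties (i)--(iii), and Proposition \ref{prop59}. You have simply spelled out the bookkeeping (producing $\xi$ from a preimage under $\Psi$, noting $\xi\ne\xi'$ since $\tau\ne\bar\tau$, and using Proposition \ref{prop59} for both the reverse inclusion and the uniqueness) that the paper leaves to the reader.
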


This corollary implies that $\Psi$ is a bijection from $M_{\tau}(QC)$ onto $M_\tau(\widetilde{QC})$ for $\tau\in\T_+$. 
Clearly, $\Psi$ is also a bijection from $M_{\bar{\tau}}(QC)$ onto $M_\tau(\widetilde{QC})$.
This suggests to define
\begin{equation}\label{Mpm-tQC}
     M_\tau^\pm(\widetilde{QC}):=\{ \, \hat\xi \,:\, \xi  \in M_\tau^\pm (QC)\,\},\qquad \tau\in \T_+ \,.
\end{equation}
Recall that we defined $M_\tau^0(\tQC)$ by equation \eqref{eqn-6.10}.

\begin{prop}\label{prop510}
For $\tau\in \T_+$ we have
$$
 M_\tau(\widetilde{QC})=M_\tau^+(\widetilde{QC})\cup M_\tau^-(\widetilde{QC}),\qquad
 M_\tau^0(\widetilde{QC})=M_\tau^+(\widetilde{QC})\cap M_\tau^-(\widetilde{QC}).
$$
\end{prop}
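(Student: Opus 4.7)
The plan is to reduce both identities to corresponding statements about $M_\tau(QC)$ via the map $\Psi$, whose restriction $\Psi|_{M_\tau(QC)}\colon M_\tau(QC)\to M_\tau(\tQC)$ is a bijection for $\tau\in\T_+$ by Corollary \ref{c3.7}.

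For the union identity, I would start from $M_\tau(QC)=M_\tau^+(QC)\cup M_\tau^-(QC)$ in Proposition \ref{prop52} and apply $\Psi$, using the definition \eqref{Mpm-tQC}; this gives the first identity at once. For the intersection identity, the injectivity of $\Psi|_{M_\tau(QC)}$ lets $\Psi$ commute with intersections, which together with $M_\tau^+(QC)\cap M_\tau^-(QC)=M_\tau^0(QC)$ from Proposition \ref{prop52} reduces the second identity to the single claim $\Psi(M_\tau^0(QC))=M_\tau^0(\tQC)$.

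The inclusion $\Psi(M_\tau^0(QC))\subseteq M_\tau^0(\tQC)$ is straightforward: any net $\{\delta_{\lambda_\om,\tau}\}$ witnessing $\xi\in M_\tau^0(QC)$ via \eqref{Mtau0} restricts to one witnessing $\hat{\xi}\in M_\tau^0(\tQC)$ via \eqref{eqn-6.10}.

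The main obstacle is the reverse inclusion. Given $\eta\in M_\tau^0(\tQC)$, I would fix a net $\delta_{\lambda_\om,\tau}\to\eta$ in $\tQC^*$ and pass, via Banach--Alaoglu applied to the unit ball of $QC^*$, to a weak-$^*$ convergent subnet with limit $\xi\in QC^*$ extending $\eta$. The key step is to argue that $\xi\in M(QC)$ rather than merely a point of $\Lambda\times\T$. Since $\xi\in\clos_{QC^*}(\Lambda\times\{\tau\})\subseteq\clos_{QC^*}(\Lambda\times\T)$, Proposition \ref{p2.1} reduces this to ruling out $\xi=\delta_{\lambda^*,\tau^*}$ for some $(\lambda^*,\tau^*)\in\Lambda\times\T$; and this is exactly where Lemma \ref{lem57} bites, because such a $\delta_{\lambda^*,\tau^*}$ would have non-multiplicative restriction to $\tQC$, contradicting that $\xi|_{\tQC}=\eta$ is multiplicative. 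Once $\xi\in M(QC)$ is in hand, $\xi\in\clos_{QC^*}(\Lambda\times\{\tau\})$ yields $\xi\in M_\tau^0(QC)$ by \eqref{Mtau0}, and $\hat{\xi}=\eta$ closes the argument.
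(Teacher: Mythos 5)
Your proof is correct, and for the surjectivity step it takes a genuinely different route from the paper. The reduction is the same: both arguments reduce the second identity, via injectivity of $\Psi|_{M_\tau(QC)}$ (Proposition~\ref{prop59}) and Proposition~\ref{prop52}, to showing that $\Psi$ maps $M_\tau^0(QC)$ onto $M_\tau^0(\tQC)$, and the forward inclusion is handled identically in both. Where you diverge is the surjectivity. The paper constructs $\xi$ \emph{explicitly}: it fixes a cutoff $c_\tau$ supported near $\tau$, sets $\overline{q}=qc_\tau+\widetilde{qc_\tau}\in\tQC$, defines $\xi(q):=\eta(\overline{q})$, verifies $\delta_{\lambda_\om}(q)\to\xi(q)$ (using Lemma~\ref{lem57} to force $\lambda_\om\to\infty$ so that the moving averages stop seeing the difference $q-\overline{q}$), and then checks multiplicativity by the direct computation $\overline{pq}-\overline{p}\cdot\overline{q}=pq(c_\tau-c_\tau^2)+\widetilde{pq}(\widetilde{c_\tau}-\widetilde{c_\tau}^2)$, which is an even function vanishing near $\tau,\bar\tau$. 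You instead extract $\xi$ \emph{abstractly}: Banach--Alaoglu gives a weak-$^*$ convergent subnet in $QC^*$ whose limit $\xi$ automatically satisfies $\xi|_{\tQC}=\eta$ and $\xi\in\clos_{QC^*}(\Lambda\times\{\tau\})$; Proposition~\ref{p2.1} then yields $\xi\in M(QC)$ at once, provided $\xi\notin\Lambda\times\T$, which is exactly what Lemma~\ref{lem57} rules out since $\xi|_{\tQC}=\eta$ is multiplicative. So you trade the paper's hands-on verification of multiplicativity (and the explicit formula for $\xi$) for a softer compactness-plus-structure argument. Your version is shorter, avoids the cutoff computation entirely, and is also more uniform: the same argument would work verbatim for $\tau\in\{\pm1\}$, whereas the paper handles that case in Theorem~\ref{t3.3} by a parallel but separate explicit construction ($\xi(q)=\eta((q+\tilde q)/2)$ with a separate integral-gap estimate for $\eta(p_oq_o)=0$). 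The paper's explicit construction has the small compensating advantage of producing a concrete formula for $\xi$ in terms of $\eta$, but for the purpose of this proposition both arguments are complete.
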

\begin{proof}
The first identity is obvious. Regarding the second one, note that by definition and by Proposition \ref{prop59},
$$
   M_\tau^+(\widetilde{QC})\cap M_\tau^-(\widetilde{QC})=\{\,\hat{\xi}\,:\, \xi\in M_\tau^0(QC)\,\}.
$$
It suffices to show that the map $\Psi:M_\tau^0(QC)\to M_\tau^0(\widetilde{QC})$ is well-defined and bijective. 
Similar to the proof of Theorem \ref{t3.3}, it can be shown that it is well-defined. 
Obviously it is injective. It remains to show that it is surjective.

Choose any $\eta\in M_\tau^0(\widetilde{QC})$. By definition, there
exists a net $\{\lambda_\om\}_{\om\in\Omega}$, $\lambda_\om\in\Lambda$, such that the
net  $ \{\delta_{\lambda_\om}\}_{\om\in \Omega}:=\{\delta_{\lambda_\om,\tau}\}_{\lambda_\om\in \Omega}$
converges to $\eta$ (in the weak-$^*$ sense of functionals on $\tQC$). {}From Lemma \ref{lem57} it follows that
$\lambda_\om\to +\infty$.  Choose a  continuous function $c_\tau$ such that $c_\tau = 1$ in a neighborhood of $\tau$ and such that  it vanishes on the lower semi-circle. The net  $\{\delta_{\lambda_\om}\}_{\om\in \Omega}$ (regarded as functionals on $QC$) converges to the functional $\xi\in QC^*$ defined by
$$
    \xi(q) := \eta(\overline{q}),  \quad q\in QC,
$$
where $\overline{q} = qc_\tau + \widetilde{qc_\tau} \in \widetilde{QC}$. 
Indeed, $q-\overline{q}$ vanishes on a neighborhood of $\tau$, and hence
$\delta_\lambda(q)=\delta_\lambda(\overline{q})$ for $\lambda$ sufficiently large. 
Therefore, $\delta_{\lambda_\om}(q)-\delta_{\lambda_\om}(\overline{q})\to 0$.
This together with $\delta_{\lambda_\om}(\overline{q})\to\eta(\overline{q})=\xi(q)$ implies that 
$\delta_{\lambda_\om}(q)\to \xi(q)$. It follows that $\xi\in  \clos_{\, QC^*}(\Lambda\times\{\tau\})$.

In order to show that $\xi$ is multiplicative over $QC$, we write (noting $c_\tau\widetilde{c_\tau}=0$)
$$
   \overline{pq}-\overline{p}\cdot\overline{q}
   =pqc_\tau+\widetilde{pqc_\tau}-(pc_\tau+\widetilde{pc_\tau})(qc_\tau+\widetilde{qc_\tau})
   =pq(c_\tau-c_\tau^2)+\widetilde{pq}(\widetilde{c_\tau}-\widetilde{c_\tau}^2).
$$
This is an even function vanishing in a neighborhood of $\tau$ and $\bar{\tau}$. Therefore $\eta(\overline{pq}-\overline{p}\cdot\overline{q})=0$,
which implies $\xi(pq)=\xi(p)\xi(q)$ by definition of $\xi$. It follows that $\xi\in M(QC)$. Therefore, $\xi \in M_\tau^0(QC)$ by definition \eqref{Mtau0}.
Since $\hat{\xi} = \eta$ this implies surjectivity.
\end{proof}

A consequence of the previous proposition is that $M_\tau(\tQC)$ is the disjoint union of
\begin{equation}\label{f.35n}
  M_\tau^0(\widetilde{QC}),\qquad 
  M_\tau^+(\widetilde{QC}) \setminus  M_\tau^0(\widetilde{QC}),\quad \mbox{ and }\quad 
  M^-_\tau(\widetilde{QC}) \setminus  M^0_\tau(\widetilde{QC}).
\end{equation}
Comparing this with \eqref{f1.8} we obtain that $\Psi$ is a {\em two-to-one} map from 
\begin{itemize}
\item[(i)] 
$M_\tau^+(QC)\setminus M_\tau^0(QC)  \;\cup\;  M_{\bar{\tau}}^-(QC)\setminus M_{\bar{\tau}}^0(QC)$
onto $M_\tau^+(\tQC)\setminus M_\tau^0(\tQC)$,
\item[(ii)] 
$M_\tau^-(QC)\setminus M_\tau^0(QC)  \;\cup\;  M_{\bar{\tau}}^+(QC)\setminus M_{\bar{\tau}}^0(QC)$
onto $M_\tau^-(\tQC)\setminus M_\tau^0(\tQC)$,
\item[(iii)]
$M_\tau^0(QC)  \;\cup\;   M_{\bar{\tau}}^0(QC)$
onto $ M_\tau^0(\tQC)$.
\end{itemize}

%%%%%%%%%%%%%%%%%%%%%%%%%%%%%%%%%
%%%%%%%%%%%%%%%%%%%%%%%%%%%%%%%%%

\section{Localization of $\boldsymbol{PQC}$ over $\boldsymbol{\tQC}$}
\label{sec:4}

Now we are going to identify the fibers $M^\eta(PQC)$ over $\eta\in \tQC$. This allows us to show that certain
quotient $C^*$-algebras that arise from $PQC$ through localization are isomorphic to concrete $C^*$-algebras. 
What we precisely mean by the latter is the following.

Let $\kA$ be a  commutative $C^*$-algebra and $\kB$ be a $C^*$-subalgebra, both having the same unit element.
For $\beta\in M(\kB)$ consider the smallest closed ideal of $\kA$ containing the ideal $\beta$, 
$$
  \kJ_\beta = \closid_{\,\kA} \{\,  b\in \kB\,:\, \beta(b)=0\,\}.
$$
It is known (see, e.g., \cite[Lemma 3.65]{BS}) that 
$$
  \kJ_\beta = \{ \, a\in\kA \,:\, a|_{M_\beta(\kA)}=0\,\}.
$$
Therein $a$ is identified with its Gelfand transform.
Hence the map
$$
  a+ \kJ_\beta \in \kA/ \kJ_\beta \mapsto a|_{M_\beta(\kA)} \in C({M_\beta(\kA)})
$$
is a well-defined *-isomorphism. In other words,  the quotient algebra $\kA/\kJ_\beta$ is isomorphic to $C(M_\beta(\kA))$.
However, it is often more useful to identify this algebra with a more concrete $C^*$-algebra $\kD_\beta$.
This motivates the following definition. A unital *-homomorphism
$\Phi_\beta: \kA\to \kD_\beta$ is said to  {\em  localize the algebra $\kA$ at $\beta\in M(\kB)$}
if it is surjective and if $\ker \Phi_\beta=\kJ_\beta$. In other words, the induced *-homomorphism
$$
a+ \kJ_\beta \in \kA/ \kJ_\beta \;\; \mapsto \;\;\Phi_\beta(a) \in \kD_\beta
$$
is a *-isomorphism between $\kA/ \kJ_\beta$ and $ \kD_\beta$. 

Our goal is to localize $PQC$ at $\eta\in M(\tQC)$ in the above sense. The corresponding fibers
are 
$$
M^\eta(PQC)=\{ \, z \in M(PQC)\, :\,  z|_{\tQC} =\eta\,\}
=
\{ \, z\in M_\xi(PQC)\,:\, \; \xi\in M^\eta(QC)\,\}.
$$
Hence they can be obtained from the fibers $M^\eta(QC)$ and  $M_\xi(PQC)$ (see Theorem \ref{thm53}).
Recall the identification of  $z\in M(PQC)$ with $(\xi,\sigma)\in M(QC)\times \{+1,-1\}$ given in \eqref{z-xi-si}.
Furthermore, $\C^N$ is considered as a $C^*$-algebra with component-wise operations and maximum norm.
(It is the $N$-fold direct product of the $C^*$-algebra $\C$.)

\begin{thm}\
\begin{enumerate}
\item[(a)]
Let $\eta\in M_{\pm1}^0(\tQC)$ and $M^\eta(QC)=\{\xi\}$. Then $M^\eta(PQC)=\{(\xi,+1),(\xi,-1)\}$ and 
$\Phi: PQC\to \C^2$ defined by 
$$
p\in PC\mapsto  (p(\pm1 +0), p(\pm 1-0)),\qquad 
q\in QC\mapsto (\xi(q),\xi(q))
$$
extends to a localizing *-homomorphism.
\item[(b)]
Let $\eta\in M_{\pm1}(\tQC)\setminus M_{\pm1}^0(\tQC)$ and $M^\eta(QC)=\{\xi,\xi'\}$ with 
$\xi\in M_{\pm 1}^+(QC)\setminus M_{\pm 1}^0(QC)$. 
Then $M^\eta(PQC)=\{(\xi,+1),(\xi',-1)\}$ and 
$\Phi: PQC\to \C^2$ defined by 
$$
p\in PC\mapsto  (p(\pm1 +0), p(\pm 1-0)),\qquad 
q\in QC\mapsto (\xi(q),\xi'(q))
$$
extends to a localizing *-homomorphism.
\item[(c)]
Let $\eta\in M_{\tau}^0(\tQC)$, $\tau\in\T_+$, and $M^\eta(QC)=\{\xi,\xi'\}$ with 
$\xi\in  M_{\tau}^0(QC)$.  Then $M^\eta(PQC)=\{(\xi,+1),(\xi,-1),(\xi',+1),(\xi',-1)\}$ and 
$\Phi: PQC\to \C^4$ defined by 
$$
p\in PC\mapsto  (p(\tau +0), p(\tau-0),p(\bar{\tau} +0), p(\bar{\tau}-0)),\quad 
q\in QC\mapsto (\xi(q),\xi(q),\xi'(q),\xi'(q))
$$
extends to a localizing *-homomorphism.
\item[(d)]
Let $\eta\in M_{\tau}^\pm(\tQC)\setminus M_{\tau}^0(\tQC)$, $\tau\in\T_+$, and $M^\eta(QC)=\{\xi,\xi'\}$ with 
$\xi\in  M_{\tau}^\pm (QC)\setminus M_{\tau}^0(QC)$.  Then $M^\eta(PQC)=\{(\xi,\pm1),(\xi',\mp 1)\}$ and 
$\Phi: PQC\to \C^2$ defined by 
$$
p\in PC\mapsto  (p(\tau \pm0), p(\bar{\tau}\mp0)),\qquad 
q\in QC\mapsto (\xi(q),\xi'(q))
$$
extends to a localizing *-homomorphism.
\end{enumerate}
\end{thm}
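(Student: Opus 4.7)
The plan is, for each of the four cases, first to determine $M^\eta(PQC)$ explicitly, then to recognize the given map $\Phi$ as the direct sum of point evaluations at the characters comprising $M^\eta(PQC)$. Surjectivity and the kernel identification then follow from standard Gelfand-theoretic facts.

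To pin down $M^\eta(PQC)$ in each case, I would use the identity
\[
M^\eta(PQC)=\bigcup_{\xi \in M^\eta(QC)} M_\xi(PQC)
\]
noted just above the theorem, feeding in the description of $M^\eta(QC)$: Corollary \ref{c3.5} in (a) and (b); Corollary \ref{c3.7} combined with the two-to-one enumeration following Proposition \ref{prop510} in (c) and (d); and statement (ii) at the start of Section \ref{sec:4} in case (d) to track that the hypothesis $\xi\in M_\tau^\pm(QC)\setminus M_\tau^0(QC)$ forces $\xi'\in M_{\bar\tau}^\mp(QC)\setminus M_{\bar\tau}^0(QC)$. I would then apply Theorem \ref{thm53} to each $\xi\in M^\eta(QC)$: parts (a)/(b) give a single point when $\xi$ lies in $M_{\bullet}^\pm\setminus M_{\bullet}^0$, and part (c) gives two points when $\xi\in M_\bullet^0$. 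Walking through the four cases in this way reproduces exactly the point sets asserted for $M^\eta(PQC)$.

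Next I would observe that $\Phi$ in each case is precisely the direct sum $a\mapsto \bigl(z(a)\bigr)_{z\in M^\eta(PQC)}$. Each $z=(\xi,\sigma)\in M^\eta(PQC)$ with base point $\tau_0\in\T$ acts on $p\in PC$ by $z(p)=p(\tau_0+\sigma\cdot 0)$ (via the identification $M(PC)\cong \T\times\{+1,-1\}$ recalled in Section \ref{s:1}) and on $q\in QC$ by $z(q)=\xi(q)$, so reading off coordinates reproduces the stated $\C^N$-valued formulas on $PC$ and $QC$. Since each $z$ is multiplicative and $PQC$ is generated as a closed $*$-algebra by $PC\cup QC$, this direct sum is automatically a well-defined unital $*$-homomorphism $\Phi\colon PQC\to \C^N$.

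For surjectivity, note that $M^\eta(PQC)$ is a finite (hence closed) subset of the compact Hausdorff space $M(PQC)$; Tietze extension lifts any element of $C(M^\eta(PQC))\cong \C^N$ to $C(M(PQC))$, and via the Gelfand transform $PQC\cong C(M(PQC))$ to an element of $PQC$, so $\Phi(PQC)=\C^N$. The kernel identification is immediate from the characterization $\kJ_\eta=\{\,a\in PQC : a|_{M^\eta(PQC)}=0\,\}$ quoted in the discussion preceding the theorem, which coincides with $\ker\Phi$ by construction. I expect the only genuine obstacle to be the sign bookkeeping in case (d); this is handled cleanly by statement (ii) of Section \ref{sec:4}, which ensures that conjugation $\xi\mapsto \xi'$ simultaneously swaps the base point $\tau\leftrightarrow\bar\tau$ and flips the $\pm$ label, yielding exactly the pair $(\xi,\pm1),(\xi',\mp1)$ demanded by the theorem.
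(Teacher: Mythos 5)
Your proposal is correct and follows essentially the same route as the paper: decompose $M^\eta(PQC)$ as $\bigcup_{\xi\in M^\eta(QC)} M_\xi(PQC)$, invoke Corollaries \ref{c3.5}, \ref{c3.7}, the conjugation statements (i)--(iii), and Theorem \ref{thm53} to enumerate the fiber, then recognize $\Phi$ as the evaluation map onto $C(M^\eta(PQC))\cong\C^N$. The only cosmetic difference is that you spell out surjectivity via Tietze extension, whereas the paper absorbs this into the already-stated fact that $a+\kJ_\eta\mapsto a|_{M^\eta(PQC)}$ is a $*$-isomorphism onto $C(M^\eta(PQC))$; these are the same Gelfand-theoretic content.
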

\begin{proof}
Note that all cases of $\eta\in M(\tQC)$ are considered (see \eqref{unions},  \eqref{f.35}, and \eqref{f.35n}). 
The description of $M^\eta(QC)$ follows from Corollaries \ref{c3.5} and \ref{c3.7}.

Let us consider only one case, say case (c). The other cases can be treated analogously. 
We can write
$$
M^\eta(PQC)=\{z\in M_\xi(PQC)\,:\, \xi\in M^\eta(QC)\}.
$$
Hence as $M^\eta(QC)=\{\xi,\xi'\}$ in this case, we get $M^\eta(PQC)=M_\xi(PQC)\cup M_{\xi'}(PQC)$. Now use
Theorem \ref{thm53} to get the correct description of $M^\eta(PQC)$ as a set of four elements $\{z_1,z_2,z_3,z_4\}$.
Identifying $C(M^\eta(PQC))=C(\{z_1,z_2,z_3,z_4\})$ with $\C^4$, the corresponding localizing homorphism is given by 
$$
\Phi: f\in PQC \mapsto (z_1(f),z_2(f),z_3(f),z_4(f))\in\C^4.
$$
Using the identification  of $z$ with $(\xi,\sigma)\in M(QC)\times \{+1,-1\}$ as given in  \eqref{z-xi-si}, 
the above form of the *-homomorphism follows by considering $f=p\in PC$ and $f=q\in QC$.
\end{proof}

%%%%%%%%%%%%%%%%%%%%%%%%%%%%%%%%%%%%
%%%%%%%%%%%%%%%%%%%%%%%%%%%%%%%%%%%%

%\providecommand{\MR}{\relax\ifhmode\unskip\space\fi MR }


\begin{thebibliography}{22}
\bibitem{BS}
A.~B\"ottcher, B.~Silbermann, \emph{Analysis of {T}oeplitz
  operators}, second ed., Springer Monographs in Mathematics, Springer-Verlag,
  Berlin, 2006, Prepared jointly with Alexei Karlovich. % \MR{2223704}

\bibitem{douglas1972banach}
R.G. Douglas, \emph{Banach algebra techniques in the theory of {T}oeplitz
  operators}, Expository
  Lectures from the CBMS Regional Conference held at the University of Georgia,
  Athens, Ga., June 12--16, 1972, Conference Board of the Mathematical Sciences
  Regional Conference Series in Mathematics, No. 15,
   American Mathematical Society, Providence, R.I., 1973. %\MR{0361894}

\bibitem{ehrhardt1996symbol}
T.~Ehrhardt, S.~Roch, and B.~Silbermann, \emph{Symbol calculus for singular integrals with operator-valued
  {PQC}-coefficients}, Singular integral operators and related topics ({T}el
  {A}viv, 1995), Oper. Theory Adv. Appl., vol.~90, Birkh\"auser, Basel, 1996,
  pp.~182--203. % \MR{1413553}

\bibitem{ehrhardt1996finite}
T.~Ehrhardt, S.~Roch, and B.~Silbermann, \emph{Finite section method for
  singular integrals with operator-valued {PQC}-coefficients}, Singular
  integral operators and related topics ({T}el {A}viv, 1995), Oper. Theory Adv.
  Appl., vol.~90, Birkh\"auser, Basel, 1996, pp.~204--243. %\MR{1413554}
  
\bibitem{EZ}
 T.~Ehrhardt, Z.~Zhou, \emph{Finite section method for
  singular integrals with operator-valued {PQC}-coefficients and a flip}, forthcoming.

\bibitem{GohKru69}
I.C. Gohberg and N.~Ja. Krupnik, \emph{The algebra generated by the {T}oeplitz
  matrices}, Funkcional. Anal. i Prilo\v zen. \textbf{3} (1969), no.~2, 46--56.
 % \MR{0250082}

\bibitem{Pow80}
S.C. Power, \emph{Hankel operators with {PQC} symbols and singular integral
  operators}, Proc. London Math. Soc. (3) \textbf{41} (1980), no.~1, 45--65.
  %\MR{579716}

\bibitem{sarason1975functions}
D.~Sarason, \emph{Functions of vanishing mean oscillation}, Trans. Amer. Math.
  Soc. \textbf{207} (1975), 391--405. % \MR{0377518}

\bibitem{sarason1977}
D.~Sarason, \emph{Toeplitz operators with piecewise quasicontinuous symbols},
  Indiana Univ. Math. J. \textbf{26} (1977), no.~5, 817--838. % \MR{0463968}

\bibitem{Si87}
B.~Silbermann, \emph{The {$C^*$}-algebra generated by {T}oeplitz and
  {H}ankel operators with piecewise quasicontinuous symbols}, 
  Toeplitz lectures 1987 (Tel-Aviv, 1987),
   Integral Equations Operator Theory \textbf{10} (1987), no.~5, 730--738. % \MR{904487}
  
\end{thebibliography}
\end{document}